\documentclass[10pt]{amsart}
\usepackage[matrix,arrow]{xy}
\usepackage{amssymb}
\usepackage{amsmath}
\usepackage{amsfonts}
\usepackage{epsfig}
\usepackage{color}
\usepackage{epstopdf}
\usepackage{graphicx}
\usepackage{amsthm}
\usepackage{enumerate}
\usepackage[mathscr]{eucal}
\usepackage{verbatim}
\usepackage{bm}
\usepackage{mathtools}

\makeatletter
\g@addto@macro\bfseries{\boldmath} 
\makeatother

\usepackage[bookmarks=true,hyperindex,pdftex,colorlinks,citecolor=blue,
linkcolor=blue,urlcolor=blue]{hyperref}
\usepackage{tikz}
\usetikzlibrary{matrix,arrows.meta}

\usepackage{orcidlink}

\theoremstyle{plain}
\newtheorem{theorem}{Theorem}[section]
\newtheorem{cor}[theorem]{Corollary}
\newtheorem{prop}[theorem]{Proposition}
\newtheorem{lemma}[theorem]{Lemma}

\theoremstyle{definition}

\newtheorem{example}[theorem]{Example}

\newtheorem{rem}[theorem]{Remark}

\newtheorem{definition}[theorem]{Definition}

\newcommand{\N}{\mathbb{N}}

\DeclareMathOperator{\co}{co}

\DeclareMathOperator{\NA}{NA}

\renewcommand{\subset}{\subseteq}

\newcommand{\pten}{\ensuremath{\widehat{\otimes}_\pi}}

\renewcommand{\leq}{\leqslant}
\renewcommand{\geq}{\geqslant}

\title[Projective norm-attainments and their implications]
{Projective norm-attainments and their implications} 

\author[M.~Han]{Manwook Han\orcidlink{0009-0000-7250-1814}}
\address[M.~Han]{Department of Mathematics, Chungbuk National University, Cheongju, Chungbuk 28644, Republic of Korea\newline
	\href{https://orcid.org/0009-0000-7250-1814}{ORCID: \texttt{0009-0000-7250-1814}  }}
\email{\texttt{mwhan0828@gmail.com}}

\author[S.~K.~Kim]{Sun Kwang Kim\orcidlink{0000-0002-9402-2002}}
\address[S.~K.~Kim]{Department of Mathematics, Chungbuk National University, Cheongju, Chungbuk 28644, Republic of Korea\newline
	\href{http://orcid.org/0000-0002-9402-2002}{ORCID: \texttt{0000-0002-9402-2002}  }}
\email{\texttt{skk@chungbuk.ac.kr}}

\author[M.~Mart\'in]{Miguel Mart\'in\orcidlink{0000-0003-4502-798X}}
\address[M.~Mart\'in]{Department of Mathematical Analysis and Institute of Mathematics (IMAG), University of Granada, 18071 Granada, Spain \newline \href{http://orcid.org/0000-0003-4502-798X}{ORCID: \texttt{0000-0003-4502-798X} } }
\email{\texttt{mmartins@ugr.es}}
\urladdr{\url{http://www.ugr.es/~mmartins/}}

\author[A.~Rueda Zoca]{Abraham Rueda Zoca\orcidlink{0000-0003-0718-1353}}
\address[A.~Rueda Zoca]{Department of Mathematical Analysis and Institute of Mathematics (IMAG), University of Granada, 18071 Granada, Spain \newline
	\href{https://orcid.org/0000-0003-0718-1353}{ORCID: \texttt{0000-0003-0718-1353}  }}
\email{\texttt{abrahamrueda@ugr.es}}

\urladdr{\url{https://arzenglish.wordpress.com}}

\keywords{$\ell_1$-space, (Symmetric) Tensor product, Tensor and nuclear norm-attainment, Nuclear operators, Nuclear polynomials}
\subjclass{Primary: 46B04; Secondary: 46B20, 46B25, 46B28, 46G25}

\date{\today}
\begin{document}

\begin{abstract}
We show that nuclear norm-attaining operators (resp.\ polynomials) are always $w^*$-dense in the space of integral operators (resp.\ polynomials). Besides, the denseness is in norm if the predual space does not contain any isomorphic copy of $\ell_1$. We also show that there are reflexive spaces for which the set of projective norm-attaining elements does not coincide with the whole projective tensor product (which is indeed also reflexive here). 
Next, we show that if $Y$ is a II-polyhedral space, then every nuclear operator from an arbitrary space $X$ to $Y^*$ attains its nuclear norm. As a consequence, if $X^*$ or $Y^*$ has the approximation property, then the set of norm-attaining operators from $X^*$ to $Y^{**}$ is dense. Finally, we study proximinality results of a natural subspace of the projective tensor product and obtain an application to integral projective norm-attaining tensors which solves a proposed open question.
\end{abstract}

\maketitle

\section{Introduction}\label{Section 1}

Let $X$ and $Y$ be Banach spaces over the scalar field $\mathbb{K}$, where $\mathbb{K}$ is either $\mathbb{R}$ or $\mathbb{C}$. All topological notions in the paper refer to the norm topology unless explicitly indicated. The space of all bounded linear operators from $X$ to $Y$ is denoted by $\mathcal{L}(X,Y)$, and we write $X^*=\mathcal{L}(X,\mathbb{K})$ for the topological dual of $X$. The closed unit ball and the unit sphere of the space $X$ are denoted, respectively, by $B_X$ and $S_X$. 
An operator $T\in\mathcal{L}(X,Y)$ is said to \emph{attain its norm} if there exists an element $x\in S_X$ such that $\|Tx\|=\|T\|$.
Such an operator $T$ is referred to as a \emph{norm-attaining operator}, and the set of all norm-attaining operators from $X$ to $Y$ is denoted by $\textrm{NA}(X,Y)$.
More generally, the question of whether the supremum or infimum defining a given norm is actually achieved, arises naturally in many contexts throughout Banach space theory, and the study of norm-attaining (linear) operators is one of the earliest and most fruitful instances of this philosophy.

In this context, one of the definitive results is James's theorem \cite{Jam}, which establishes that a Banach space $X$ is reflexive if and only if every element in its dual space $X^*$ attains its norm.
Furthermore, the Bishop--Phelps theorem \cite{BP} guarantees that for every Banach space $X$, the set of norm-attaining functionals is always norm-dense in $X^*$.

These results naturally lead to the investigation of the abundance of norm-attaining operators.
In his seminal work \cite{Lind}, Lindenstrauss demonstrated that the Bishop--Phelps theorem cannot be extended to vector-valued operators, that is, $\mathrm{NA}(X,Y)$ is not always dense in $\mathcal{L}(X,Y)$.
However, he proved a major positive counterpart: if the domain $X$ is reflexive, then it has property $A$ that means $\mathrm{NA}(X,Y)$ remains dense in $\mathcal{L}(X,Y)$ for any $Y$, and the same happens, for instance, if $X=\ell_1$. On the range space side, it is shown in the same paper that $\mathrm{NA}(X,Y)$ is dense if $Y$ is a finite-dimensional polyhedral space or a subspace of $\ell_\infty$ containing the canonical copy of $c_0$, among many other examples. A striking result was achieved by Bourgain \cite{Bour}, who proved that every Banach space with the Radon--Nikod\'ym property (RNP) has property $A$, and conversely, if every equivalent renorming of $X$ has property $A$, then $X$ has the RNP (this last result needs a refinement due to Huff \cite{Huff}). Since then, the interplay between geometric properties of unit balls (such as dentability, extreme point structure, and polyhedrality) and the abundance of norm-attaining operators has remained a central theme in Banach space theory. We refer the reader to the paper \cite{acostaRACSAM} for an account of all classical results and to \cite{residual} and references therein for more recent ones.

An effective framework for studying the spaces of bounded linear operators between Banach spaces is provided by the theory of tensor products.
Given Banach spaces $X$ and $Y$, the projective tensor product $X\widehat\otimes_\pi Y$ and the injective tensor product $X\widehat\otimes_\varepsilon Y$ are the completions of the algebraic tensor product $X\otimes Y$ under two natural norms: 
$$\|z\|_\pi\coloneqq\inf\left\{\sum_{i=1}^n\|x_i\|\|y_i\|\colon z=\sum_{i=1}^nx_i\otimes y_i \right\}$$
and
$$\|z\|_\varepsilon\coloneqq\sup\left\{\left|\sum_{i=1}^nx^*(x_i)y^*(y_i)\right|\colon  x^*\in B_{X^*},~y^*\in B_{Y^*},~\text{and}~z=\sum_{i=1}^nx_i\otimes y_i \right\},$$
respectively. Here, the infimum and supremum are taken over all possible representations of $z\in X\otimes Y$.
These two constructions encode operators in dual ways.
On the one hand, the dual space of the projective tensor product $\left(X\widehat\otimes_\pi Y\right)^*$ is isometrically isomorphic to the space of bounded linear operators $\mathcal{L}(X,Y^*)$ \cite[Chapter 2.2]{Rya}.
On the other hand, every elementary tensor $x\otimes y$ induces a rank-one operator $x^*\mapsto x^*(x)y$ from $X^*$ to $Y$, and this identification extends to an isometric embedding of $X\widehat\otimes_\varepsilon Y$ into $\mathcal{L}(X^*,Y)$ whose image coincides with the closure of the set of all finite-rank operators \cite[Chapter 3.1]{Rya}. Consequently, under the above point of view, $\Vert\cdot\Vert_\varepsilon$ is nothing but the operator norm.

One may ask the same question for a tensor as classical theory asks for an operator. Following \cite{DJRR}, we say that an element $z\in X\widehat\otimes_\pi Y$ \emph{attains its projective norm} if the infimum defining $\|z\|_\pi$ is realized by some representation; that is, if there exist sequences $(x_i)_i$ in $X$ and $(y_i)_i$ in $Y$ such that
$$\|z\|_\pi=\sum_{i=1}^\infty \|x_i\|\|y_i\|\quad\text{and}\quad z=\sum_{i=1}^\infty x_i\otimes y_i.$$
We write $\mathrm{NA}\left(X\widehat\otimes_\pi Y\right)$ for the set of all such elements.
This is the exact tensor product counterpart of classical norm-attainment.

Far from being a merely formal analogue, this notion has direct consequences in the classical theory.
Indeed, it was shown in \cite[Corollary 3.11]{DJRR} that if every element of $X\widehat\otimes_\pi Y$ attains its projective norm, then $\mathrm{NA}(X,Y^*)$ is dense in $\mathcal{L}(X,Y^*)$.
The same circle of ideas admits a counterpart for symmetric tensor products and homogeneous polynomials; we defer the precise formulations to Subsection \ref{Subsection 2}.

The question of how abundant norm-attaining tensors are has been studied from two complementary angles: density and full-attainment.
On the density side, Dantas, Jung, Rold\'an, and the last author \cite[Theorem 4.8]{DJRR} proved that $\mathrm{NA}\left(X\widehat\otimes_\pi Y\right)$ is dense in $X\widehat\otimes_\pi Y$ whenever both spaces have the metric $\pi$-property or one of them has the metric $\pi$-property and the other is uniformly convex.
Since $L_p(\mu)$-spaces, $L_1$-preduals, and spaces with a monotone finite-dimensional decomposition all satisfy the metric $\pi$-property, this covers most classical spaces.
They also exhibited Banach spaces $X$ and $Y$, both failing the {approximation property (AP)}, for which $\mathrm{NA}\left(X\widehat\otimes_\pi Y^*\right)$ is \emph{not} dense in $X\widehat\otimes_\pi Y^*$, demonstrating that the AP plays an essential role in these density results.
In a related direction, Dantas, Garc\'ia-Lirola, Jung, and the last author \cite[Theorem 4.7]{DGJR} proved that $\mathrm{NA}\left(X^*\widehat\otimes_\pi Y^*\right)$ is $w^*$-dense in $X^*\widehat\otimes_\pi Y^*$ whenever $X^*$ or $Y^*$ has the RNP and $X^*$ or $Y^*$ has the AP.

On the full-attainment side (that is, on the question of when every element in a projective tensor product attains its projective norm), the major contribution was done by Garc\'ia-Lirola, Guerrero-Viu, and the last author in \cite{GGR} (even though there are previous examples in the literature as \cite[Propositions 3.5, 3.6, and 3.8]{DJRR} and \cite[Theorem 4.1]{DGJR}). They proved that if $X$ is a subspace of an $\ell_1$-predual space, and $Y$ is $1$-complemented in $Y^{**}$ (in particular, if $Y$ is a dual space), then $\mathrm{NA}\left(X^*\widehat\otimes_\pi Y\right)=X^*\widehat\otimes_\pi Y$, provided that $X^*$ or $Y^{**}$ has the AP.

The projective and injective tensor products provide the appropriate setting to introduce nuclear and integral operators on Banach spaces, to which we now turn. An operator $T\in\mathcal{L}(X,Y)$ is called \emph{nuclear} if it admits a representation $T=\sum_{i=1}^\infty x^*_i\otimes y_i$ with $x^*_i\in X^*$ and $y_i\in Y$ for every $i\in \N$ satisfying that $\sum_{i=1}^\infty \|x^*_i\|\|y_i\|<\infty$. The \emph{nuclear norm} of $T$ is defined by
$$\|T\|_\text{nu}\coloneqq\inf\left\{\sum_{i=1}^\infty\|x_i^*\|\|y_i\|\colon T=\sum_{i=1}^\infty x_i^*\otimes y_i\right\},$$
where the infimum is taken over all such representations.
We write $\mathcal{N}(X,Y)$ for the space of nuclear operators equipped with $\|\cdot\|_\mathrm{nu}$ and $\mathrm{NA}_\mathrm{nu}(X,Y)$ for those nuclear operators for which the infimum above is actually a minimum.  
The natural map $J\colon X^*\widehat\otimes_\pi Y\to \mathcal{N}(X,Y)$ defined by 
$$J\left(\sum_{i=1}^\infty x^*_i\otimes y_i\right)\colon x\mapsto \sum_{i=1}^\infty x_i^*(x)y_i\quad \forall x\in X,$$
is a norm-one quotient map, and induces an isometric isomorphism $\mathcal{N}(X,Y)\equiv \left(X^*\widehat\otimes_\pi Y\right)/\mathrm{ker}(J)$ \cite[Chapter 2.6]{Rya}. Grothendieck showed that $J$ is an isometric isomorphism (equivalently, $\mathrm{ker}(J)=\{0\}$) if and only if one of the spaces $X^*$ or $Y$ has the AP (see \cite[Proposition 4.6]{Rya}). In such a case, statements phrased in terms of the projective tensor product and in terms of nuclear operators coincide. Let us comment that, being $\ker(J)=0$ or not, all the positive results about density of projective norm-attaining tensors from \cite{DGJR,DJRR} have a corresponding counterpart for the density of nuclear operators which attain their nuclear norm which are actually stated in the same papers. 

In a dual way, the injective tensor product embeds isometrically into a space of continuous functions. To be more precise, the map $\Phi\colon X\widehat\otimes_\varepsilon Y\hookrightarrow C\left(B_{X^*}\times B_{Y^*},w^*\times w^*\right)$ defined by $$\Phi\left(\sum_{i=1}^\infty x_i\otimes y_i\right)(x^*,y^*)=\sum_{i=1}^\infty x^*(x_i)\, y^*(y_i)\quad \forall (x^*,y^*)\in B_{X^*}\times B_{Y^*}$$
is an isometry.
Since the dual of the space $C(K)$ of continuous functions on a compact Hausdorff space $K$ is the space $\mathcal{M}(K)$ of Radon measures on $K$, every bounded functional on $X\widehat\otimes_\varepsilon Y$ extends to integration against one such measure. (By a \emph{Radon measure} on $K$, we mean a regular Borel measure, with total variation norm $|\mu|_\mathrm{TV}$, which we identify with an element of $C(K)^*$ via the Riesz representation theorem.) Accordingly, an operator $T\in\mathcal{L}(X,Y^*)$ is called \emph{integral} if there exists a Radon measure $\mu$ on $(B_{X^*}\times B_{Y^*},w^*\times w^*)$ such that
$$\langle Tx,y\rangle=\int_{B_{X^*}\times B_{Y^*}}x^*(x)y^*(y)\,d\mu(x^*,y^*)\quad \forall (x,y)\in X\times Y.$$
In this case, we say that the measure $\mu$ \emph{represents} $T$. The integral norm of $T$ is defined by $\|T\|_\mathcal{I}\coloneqq\inf|\mu|_\mathrm{TV}$, where the infimum is taken over all measures $\mu$ representing $T$.
We write $\mathcal{I}(X,Y^*)$ for the space of integral operators, equipped with $\|\cdot\|_\mathcal{I}$.
The integral operators are precisely the bounded functionals on the injective tensor product: one has the canonical isometric identification $\mathcal{I}(X,Y^*)\equiv \left(X\widehat\otimes_\varepsilon Y\right)^*$ \cite[Chapter 3.5]{Rya}.
Every nuclear operator is integral with $\|T\|_\mathcal{I}\leq\|T\|_\mathrm{nu}$, in other words, we have that $\mathcal{N}(X,Y^*)\subset\mathcal{I}(X,Y^*)$ contractively. At the end of this introduction, Subsection~\ref{Subsection 2} contains analogous definitions to the above ones for the multilinear and polynomial case.

Let us observe that in each of the results exposed above, the conclusion is obtained under a structural hypothesis as the metric $\pi$-property, the RNP or the AP. It is natural to ask how far such hypotheses can be weakened.
Our aim in this paper is to establish more general density and full-attainment results with minimum requirements. 

Let us outline the content of the paper. The results on denseness are contained in Section~\ref{Section 3}. The main tool to get the results here is an abstract density principle valid for every Banach space and dealing with extreme points (Lemma~\ref{gen}). Applying this result to tensor products via the description of the extreme point structure of the dual unit ball, we get that the set $\mathrm{NA}_\mathrm{nu}(X,Y)$ of nuclear norm-attaining elements is $w^*$-dense in the dual $\mathcal{I}(X,Y^*)$ of the injective tensor product $X\widehat\otimes_\varepsilon Y$ for arbitrary Banach spaces $X$ and $Y$ (Theorem~\ref{cor:wstar-density}). Besides, they are norm-dense as soon as $X\widehat\otimes_\varepsilon Y$ does not contain any isomorphic copy of $\ell_1$ (Theorem \ref{cor:wstar-density} again). If, in addition, either $X^*$ or $Y^*$ has the AP, it follows that the set of projective norm-attaining tensors is dense in $X^*\widehat\otimes_\pi Y^*$ (Corollary~\ref{Corollary:onedualAP-NAtensors-wstar-dense}). The results above improve previous ones from \cite{DGJR} and \cite{Ros2}. Analogous statements hold for symmetric tensor products (Corollary \ref{cor:polywstar-density}) which also improve results from \cite[Section 3]{DGJR}. Finally, we also show that $\ell_p\widehat\otimes_\pi\ell_q$ contains a non projective norm-attaining element for $1<p,q<\infty$ with $\frac{1}{p}+\frac{1}{q}<1$ (Example~\ref{Example}), being the first example of this kind involving reflexive spaces.

The second line of results of this paper, contained in Section~\ref{Section 4}, is related to the study of a geometric condition under which every element of the projective tensor product attains its projective norm.
Recall that a Banach space $X$ is \emph{II-polyhedral} if there is $0\leq r<1$ such that the $w^*$-cluster points of the set $\mathrm{Ext}(B_{X^*})$ lie in $rB_{X^*}$  (see \cite{FV} and references therein for background). The main result of the section (Theorem~\ref{theoremB}) shows that given a II-polyhedral Banach space $X$ and an arbitrary Banach space $Y$ such that one of the spaces $X^*$ or $Y^*$ has the AP, then 
$\mathrm{NA}\left(X^*\widehat\otimes_\pi Y^*\right)=X^*\widehat\otimes_\pi Y^*$. As a consequence, using \cite[Corollary 3.11]{DJRR}, we get that $\overline{\mathrm{NA}(X^*,Y^{**})}=\mathcal{L}(X^*,Y^{**})$ and $\overline{\mathrm{NA}(Y^*,X^{**})}=\mathcal{L}(Y^*,X^{**})$ under the same hypotheses (Corollary~\ref{cor:lineardense}), providing new examples on which norm-attaining operators are dense. The proofs in this section are based on a measure-theoretic decomposition that reduces an arbitrary representing measure to an atomic one supported on the extreme points, and the argument extends to the multilinear setting (Theorem \ref{main}).

Finally, in Section~\ref{Section 5} we examine structural consequences of nuclear norm-attainment for $\ker(J)$ when viewed as a subspace of the projective tensor product. On the one hand, for general Banach spaces $X$ and $Y$, the subspace $\ker(J)$ is proximinal in $X\widehat\otimes_\pi Y$ whenever $\mathrm{NA}_\mathrm{nu}(X,Y)=\mathcal{N}(X,Y)$ (Proposition~\ref{proximinal}). On the other hand, if $X^*$ and $Y^*$ are separable, then $\ker(J)$ is automatically proximinal in $X^*\widehat\otimes_\pi Y^*$ (Theorem~\ref{theo:mainproxi}). As a by-product, we obtain an improvement of \cite[Proposition~3.2]{ADGJR} on integral projective norm-attaining tensors (Corollary~\ref{cor:INA}). This last result, taken together with Example~\ref{Example}, gives a negative answer to \cite[Question~2.2]{ADGJR}.

\subsection{Notation for multilinear forms and polynomials}\label{Subsection 2}
All the framework we have presented in the linear case extends to the multilinear setting as follows.
For $N\in\mathbb{N}$ and Banach spaces $X_1,\ldots,X_N$, let $\otimes_{i=1}^NX_i$ denote their $N$-fold algebraic tensor product, spanned by the elementary tensors $\otimes_{i=1}^N x_i$, $x_i\in X_i$.
Its completion under the projective norm 
$$\|z\|_\pi\coloneqq\inf\left\{\sum_{j=1}^n\prod_{i=1}^N\|x_{i,j}\|\colon z=\sum_{j=1}^n\otimes_{i=1}^Nx_{i,j}\right\}$$
is the \emph{$N$-fold projective tensor product} $\widehat\otimes_{\pi,i=1}^N X_i$, and its completion under the injective norm 
$$\|z\|_\varepsilon\coloneqq\sup\left\{\left|\sum_{j=1}^n\prod_{i=1}^Nx_i^*(x_{i,j})\right|\colon x_i^*\in B_{X_i^*}~\text{and}~z=\sum_{j=1}^n \otimes_{i=1}^Nx_{i,j} \right\}$$
is the \emph{$N$-fold injective tensor product} $\widehat\otimes_{\varepsilon,i=1}^N X_i$.
Just as $\widehat\otimes_\pi$ and $\widehat\otimes_\varepsilon$ encode operators, these encode $N$-linear forms: writing $\mathcal{B}\left(\prod_{i=1}^N X_i\right)$ for the space of bounded $N$-linear forms on $X_1\times\cdots\times X_N$, one has the canonical isometric duality $\left(\widehat\otimes_{\pi,i=1}^N X_i\right)^*\equiv\mathcal{B}\left(\prod_{i=1}^N X_i\right)$, while $\widehat\otimes_{\varepsilon,i=1}^NX_i$ embeds isometrically into $C\left(\prod_{i=1}^NB_{X^*_i},\prod_{i=1}^N w^*\right)$ via 
$$\Phi\left(\sum_{j=1}^n\otimes_{i=1}^N x_{i,j}\right)\left((x_i^*)_i\right)=\sum_{j=1}^n\prod_{i=1}^Nx^*_i(x_{i,j})\quad \forall (x_i^*)_{i=1}^N\in \prod_{i=1}^NB_{X^*_i}.$$
Similarly to integral operators, we consider the space $\mathcal{I}\left(\prod_{i=1}^N{X_i}\right)$ of integral multilinear forms  on $\prod_{i=1}^N X_i$ with the canonical isometric equivalence $\left(\widehat\otimes_{\varepsilon,i=1}^NX_i\right)^*\equiv\mathcal{I}\left(\prod_{i=1}^N{X_i}\right)$. This space is endowed with the integral norm $\Vert{}\cdot \Vert{}_\mathcal{I}$, which is defined as the infimum of the total variations over all measures representing the multilinear form.

Restricting to $X_1=\cdots=X_N=X$ we can consider the symmetric counterpart, in which $N$-linear forms become homogeneous polynomials.
Let $\otimes_{s,N}X\subset\otimes_{i=1}^N X$ be the symmetric $N$-fold algebraic tensor product, spanned by $x^N\coloneqq x\otimes\overset{N}\cdots\otimes x$ for every $x\in X$.
Here, the roles above are played by polynomials: writing $\mathcal{P}\left(^NX\right)$ for the space of $N$-homogeneous polynomials on $X$, the projective side gives the isometric isomorphism $\left(\widehat\otimes_{\pi,s,N}X\right)^*\equiv\mathcal{P}\left(^NX\right)$, while the injective side embeds isometrically into $C\left(B_{X^*},w^*\right)$, so that $\left(\widehat\otimes_{\varepsilon,s,N}X\right)^*\equiv\mathcal{P}_\mathcal{I}\left(^NX\right)$, the space of $N$-homogeneous integral polynomials on $X$ with the integral norm $\|\cdot\|_\mathcal{I}$ \cite{BR}.

An $N$-linear form $M\in\mathcal{B}\left(\prod_{i=1}^N X_i\right)$ is said to be \emph{nuclear} if there exists a functional $x_{i,j}^*\in X_i^*$ for each $(i,j)\in\{1,...,N\}\times \mathbb{N}$ such that
$$\sum_{j=1}^\infty\prod_{i=1}^N\|x_{i,j}^*\|<\infty \quad \text{and}\quad M\left((x_i)_{i=1}^N\right)=\sum_{j=1}^\infty\prod_{i=1}^N x^*_{i,j}(x_i)\quad \forall (x_i)_{i=1}^N\in \prod_{i=1}^N X_i,$$
and its nuclear norm $\|M\|_\mathrm{nu}$ is the infimum of $\sum_{j=1}^\infty\prod_{i=1}^N\|x_{i,j}^*\|$ over all such representations. In the symmetric case, a polynomial $P\in\mathcal{P}\left(^NX\right)$ is \emph{nuclear} if there exist sequences $(\varphi_i)_i$ in $S_{X^*}$ and $(\lambda_i)_i$ in $\mathbb{K}$, such that 
$$\sum_{i=1}^\infty|\lambda_i|<\infty \quad\text{and}\quad P(x)=\sum_{i=1}^\infty \lambda_i\varphi_i(x)^N\quad \forall x\in X,$$
and the nuclear norm $\|P\|_\mathrm{nu}$ is the infimum of $\sum_{i=1}^\infty|\lambda_i|$ over all such representations. We write $\mathcal{N}\left(\prod_{i=1}^N X_i\right)$ and $\mathcal{P}_\mathrm{nu}\left(^NX\right)$ for the corresponding spaces.

In both settings, every nuclear element is integral, with $\|\cdot\|_\mathcal{I}\leq\|\cdot\|_\mathrm{nu}$. We say that an element of $\mathcal{N}\left(\prod_{i=1}^N X_i\right)$ or $\mathcal{P}_\mathrm{nu}\left(^NX\right)$ \emph{attains its nuclear norm} if the infimum defining $\|\cdot\|_\mathrm{nu}$ is realized by some representation. We denote by $\mathrm{NA}_\mathrm{nu}\left(\prod_{i=1}^N X_i\right)$ and $\mathrm{NA}_\mathrm{nu}\left(^NX\right)$ the corresponding sets of nuclear norm-attaining elements.

Grothendieck's result on when $J$ is injective has an analogous form in the multilinear setting: if all but one of $X_1^*,\ldots,X_N^*$ have the AP, then $\widehat\otimes_{\pi,i=1}^NX_i^*\equiv\mathcal{N}\left(\prod_{i=1}^NX_i\right)$ isometrically and, in the symmetric case, if $X^*$ has the AP, then $\mathcal{P}_\mathrm{nu}\left(^NX\right)\equiv\widehat\otimes_{\pi,s,N} X^*$ isometrically \cite[p.~20]{BR}. Then, as happens in the linear case, all the statements on nuclear elements and on symmetric tensors coincide under this hypothesis.

\section{Density of nuclear norm-attaining operators}\label{Section 3}

Let $X$ and $Y$ be two Banach spaces.
In this section we will examine the following two problems:
\begin{itemize}
    \item When is $\mathrm{NA}_\mathrm{nu}(X,Y)$ $w^*$-dense in $\mathcal{I}(X,Y)$?
    \item When is $\mathrm{NA}_\mathrm{nu}\left(^NX\right)$ $w^*$-dense in $\mathcal{P}_{\mathcal{I}}\left(^NX\right)$?
\end{itemize}

Our motivation goes back to \cite[Theorems 3.11 and 4.7]{DGJR}.
On the one hand, it is proved in \cite[Theorem 4.7]{DGJR} that $\mathrm{NA}_\mathrm{nu}(X,Y^*)$ is $w^*$-dense in $\mathcal{I}(X,Y^*)$ if $X^*$ has the RNP. 
On the other hand, it is proved in \cite[Theorem 3.11]{DGJR} that $\mathrm{NA}_\mathrm{nu}\left(^NX\right)$ is $w^*$-dense in $\mathcal{P}_{\mathcal{I}}\left(^NX\right)$ if $\widehat\otimes_{\varepsilon,s,N}X$ does not contain any isomorphic copy of $\ell_1$.

To begin with, we will improve the above results by showing that the $w^*$-density above always holds.  Moreover, this density can be replaced by the norm-density whenever $X\widehat\otimes_\varepsilon Y$ and $\widehat\otimes_{\varepsilon,s,N}X$ does not contain any isomorphic copy of $\ell_1$, respectively. The result will follow from the following principle which is valid in a more general context. This also slightly improves the fact that the closed unit ball of the dual of a Banach space coincides with the $w^*$-closed convex hull of its extreme points which is an application of the Krein-Milman theorem.

\begin{lemma}[Main technical lemma]\label{gen}
    Let $X$ be a Banach space, and let
    $$A\coloneqq\left\{\sum_{i=1}^n \lambda_i e_i^* \colon e_i^*\in \mathrm{Ext}(B_{X^*}),~\left\|\sum_{i=1}^n\lambda_i e_i^*\right\|=\sum_{i=1}^n\lambda_i=1,~\lambda_i\geq 0\right\}.$$
    Then $A$ is $w^*$-dense in $S_{X^*}$.
    Moreover, if $X$ does not contain any isomorphic copy of $\ell_1$, then $A$ is norm-dense in $S_{X^*}$.
\end{lemma}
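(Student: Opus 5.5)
The plan is to reduce everything to a single $w^*$-compact face of $B_{X^*}$ on which the norm is automatically equal to one, and then apply a Krein--Milman type theorem inside that face. Fix $x^*\in S_{X^*}$ and $\varepsilon>0$. By the Bishop--Phelps theorem there is $y^*\in S_{X^*}$ with $\|x^*-y^*\|<\varepsilon$ which attains its norm at some $x\in S_X$, i.e.\ $y^*(x)=1$. Both the $w^*$ and the norm statements will then follow once we show that $y^*$ lies in the $w^*$-closure (resp.\ norm closure) of $A$. For the $w^*$ case we also use that norm-closeness implies $w^*$-closeness, so $x^*$ lies in the $w^*$-closure of $A$ as soon as every such $y^*$ does.

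Consider the set
$$F\coloneqq\{z^*\in B_{X^*}\colon z^*(x)=1\}.$$
It is nonempty (it contains $y^*$), convex and $w^*$-compact, since it is the intersection of $B_{X^*}$ with a $w^*$-closed hyperplane. It is also a face of $B_{X^*}$: if $z^*=t u^*+(1-t)v^*$ with $u^*,v^*\in B_{X^*}$, $0<t<1$ and $z^*\in F$, then $\operatorname{Re}u^*(x)\leq 1$ and $\operatorname{Re}v^*(x)\leq1$ force $u^*(x)=v^*(x)=1$. Hence $\mathrm{Ext}(F)\subset \mathrm{Ext}(B_{X^*})$. Moreover, any convex combination $\sum\lambda_i e_i^*$ with $e_i^*\in\mathrm{Ext}(F)$ satisfies
$$1=\sum\lambda_i e_i^*(x)\leq\Bigl\|\sum\lambda_ie_i^*\Bigr\|\leq\sum\lambda_i=1,$$
so $\co(\mathrm{Ext}(F))\subset A$. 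This is the key observation: inside $F$ the norm condition in the definition of $A$ is automatic.

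For the $w^*$-density, apply the Krein--Milman theorem in $(X^*,w^*)$ to the $w^*$-compact convex set $F$. This gives $y^*\in\overline{\co}^{w^*}(\mathrm{Ext}(F))\subset\overline{A}^{w^*}$, and the first part of the lemma follows.

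For the norm-density we need to replace $w^*$-closure by norm closure. Here I would invoke Haydon's theorem: if $X$ contains no isomorphic copy of $\ell_1$, then every $w^*$-compact convex subset $K$ of $X^*$ is the norm-closed convex hull of its extreme points. Applied to $K=F$, this gives $y^*\in\overline{\co}^{\|\cdot\|}(\mathrm{Ext}(F))\subset\overline{A}^{\|\cdot\|}$. Combined with $\|x^*-y^*\|<\varepsilon$, this shows that $A$ is norm-dense in $S_{X^*}$. The only nontrivial ingredient I expect is correctly quoting Haydon's theorem in this form, i.e.\ for arbitrary $w^*$-compact convex sets rather than just for $B_{X^*}$. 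The face construction via a norm-attaining functional is exactly what lets us pass from the unit ball to a set where the norm condition holds, so no further approximation argument is needed.
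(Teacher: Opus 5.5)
Your proposal is correct and is essentially the paper's proof: Bishop--Phelps gives a norm-attaining $y^*$, the $w^*$-compact face $\{z^*\in B_{X^*}\colon z^*(x)=1\}$ has extreme points in $\mathrm{Ext}(B_{X^*})$, and convex combinations of them automatically have norm one; then Krein--Milman gives the $w^*$ statement and Haydon's theorem the norm statement. The only cosmetic difference is that the paper handles the $w^*$ part by choosing $y^*$ inside a given $w^*$-open set $U$, whereas you approximate in norm and then pass to the $w^*$-closure, which is equally valid.
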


To prove the second part of the above lemma, we invoke \cite[Theorem 3.3]{Haydon} of Haydon (Lemma \ref{Hay} in the following). Although this result is originally stated for real Banach spaces, it holds for complex ones as well, since if a complex Banach space contains no isomorphic copy of $\ell_1$, then its underlying real Banach space does not contain any isomorphic copy of $\ell_1$ as well. This fact follows directly from Rosenthal’s $\ell_1$ theorem and its complex counterpart established by Dor, see \cite[Chapter~XI]{Diestel} for instance.

\begin{lemma}\label{Hay}
    If $X$ is a Banach space that contains no isomorphic copy of $\ell_1$, then every $w^*$-compact convex subset of $X^*$ is the norm closed convex hull of its extreme points.
\end{lemma}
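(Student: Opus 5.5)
Lemma~\ref{Hay} is \cite[Theorem 3.3]{Haydon}, which we only invoke; the following is the route we would take to prove it. Let $K\subset X^*$ be $w^*$-compact and convex, and put $C\coloneqq\overline{\co}^{\|\cdot\|}(\mathrm{Ext}(K))$. Since $C\subset K$, we must show $K\subset C$. Suppose some $x^*\in K\setminus C$. The norm closed convex set $C$ is then separated from $x^*$ by Hahn--Banach in the dual space $X^*$. This gives $x^{**}\in X^{**}$ and $c\in\mathbb{R}$ with
$$\sup_{e^*\in \mathrm{Ext}(K)}\mathrm{Re}\,x^{**}(e^*)\leq \sup_{C}\mathrm{Re}\,x^{**}\leq c<\mathrm{Re}\,x^{**}(x^*).$$
The whole point is that $x^{**}$ is only norm continuous, not $w^*$-continuous. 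Hence Krein--Milman cannot be applied directly, and the hypothesis on $\ell_1$ must be used to control $x^{**}$ on $K$.

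First, we use the characterisation of spaces without $\ell_1$ due to Odell--Rosenthal and Bourgain--Fremlin--Talagrand. If $X\not\supset\ell_1$, then every $x^{**}$, restricted to $(K,w^*)$, is universally measurable. Moreover, it satisfies the barycentric formula
$$x^{**}(r(\mu))=\int_K x^{**}\,d\mu$$
for every Radon probability $\mu$ on $K$ with barycenter $r(\mu)$. In the separable case this follows because $x^{**}|_K$ is the pointwise limit of a sequence from $X$, so dominated convergence applies. In general it follows from the fact that $x^{**}|_K$ lies in the pointwise closure of a relatively compact set of continuous functions (a Rosenthal compactum), together with the BFT result that such limits are in the closure of sequences on each measure's support.

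Second, by the Choquet--Bishop--de Leeuw theorem there is a maximal probability measure $\mu$ on $K$ with $r(\mu)=x^*$. In the metrizable case $\mu(\mathrm{Ext}(K))=1$, and we conclude at once:
$$\mathrm{Re}\,x^{**}(x^*)=\int_K \mathrm{Re}\,x^{**}\,d\mu\leq c,$$
a contradiction. The metrizable case can be arranged when $X$ is separable. For general $X$ we would try to reduce to this case by taking a separable subspace $Z\subset X$ that is rich enough relative to $x^*$ and $x^{**}$, and restricting to $Z$. The difficulty is that extreme points of the restricted set need not lift to extreme points of $K$. To fix this we would pass to the $w^*$-closed face of $K$ over each extreme point of the restricted compact set; such a face contains an extreme point of $K$ by Krein--Milman.

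The main obstacle is this final measure-concentration step in the nonmetrizable case. A maximal measure is only "concentrated on $\mathrm{Ext}(K)$" in the Baire sense, while $\{\mathrm{Re}\,x^{**}>c\}$ is merely universally measurable. The alternative we would try first is to bypass measures by Simons' sup-limsup inequality. Write $x^{**}|_K$ as a limit of a sequence $(x_n)$ in $X$ on a suitable countably determined piece of $K$. Simons' inequality, with $\mathrm{Ext}(K)$ as a James boundary for $K$, then yields
$$\sup_K \limsup_n \mathrm{Re}\,x_n\leq\sup_{\mathrm{Ext}(K)}\limsup_n \mathrm{Re}\,x_n,$$
which is exactly the inequality needed to contradict the separation above.
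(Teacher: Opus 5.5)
\textbf{Assessment.} There is a genuine gap: your sketch proves the lemma only when $X$ is separable. The paper gives no proof of its own. It imports Haydon's Theorem 3.3, adds a remark reducing the complex case to the real one, and then applies the lemma to arbitrary, typically nonseparable, spaces: any $X$ in Lemma~\ref{gen}, and $X\widehat\otimes_\varepsilon Y$ in Theorem~\ref{cor:wstar-density}(2).

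\textbf{The separable case is fine.} When $X$ is separable, $(K,w^*)$ is metrizable. Choquet's theorem gives $\mu$ with $\mu(\mathrm{Ext}(K))=1$ and $r(\mu)=x^*$. Odell--Rosenthal gives a bounded sequence $x_n\to x^{**}$ pointwise on $X^*$. Dominated convergence then yields $\mathrm{Re}\,x^{**}(x^*)=\int_{\mathrm{Ext}(K)}\mathrm{Re}\,x^{**}\,d\mu\le c$. Simons' sup-limsup theorem with the same sequence would also do it. For nonseparable $X$ you correctly identify the obstruction, namely the Baire versus universally measurable issue for maximal measures. Neither remedy you propose overcomes it.

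\textbf{The separable reduction.} Lifting extreme points is not the real difficulty. Your face argument already shows $\mathrm{Ext}(R(K))\subset R(\mathrm{Ext}(K))$ for the restriction $R\colon X^*\to Z^*$. The real difficulty is that $x^{**}$ does not descend to $Z$. There is no $z^{**}\in Z^{**}$ with $z^{**}\circ R=x^{**}$ on $K$ unless $x^{**}$ is constant on the fibres of $R|_K$. Nor does a norm approximation in $Z^*$, whose norm is smaller, transfer back to $X^*$. For example, take $X=c_0(\Gamma)$ with $\Gamma$ uncountable, $K=B_{\ell_1(\Gamma)}$ and $x^{**}=\mathbf{1}\in\ell_\infty(\Gamma)$. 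For every separable $Z$ one has $R(e^*_\gamma)=R(0)$ for all but countably many $\gamma$, while $x^{**}(e^*_\gamma)=1\neq 0=x^{**}(0)$. So after restricting there is no functional on $Z^*$ left to separate $R(x^*)$ from $R(\mathrm{Ext}(K))$.

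\textbf{The Simons route.} The sup-limsup theorem gives $\sup_K\limsup_n\mathrm{Re}\,x_n=\sup_{\mathrm{Ext}(K)}\limsup_n\mathrm{Re}\,x_n$. This is \emph{not} by itself the inequality you need. To contradict the separation you need both $x_n(x^*)\to x^{**}(x^*)$ and $\limsup_n\mathrm{Re}\,x_n\le c$ at \emph{every} point of $\mathrm{Ext}(K)$. Convergence on a countably determined piece of $K$ containing $x^*$ gives no control on $\mathrm{Ext}(K)$. In the example above, no sequence in $X$ converges to $x^{**}$ on $\mathrm{Ext}(K)$ at all, because every sequence in $c_0(\Gamma)$ vanishes at $e^*_\gamma$ for all but countably many $\gamma$. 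So the required domination on $\mathrm{Ext}(K)$ would have to come from some other mechanism. Supplying that mechanism is exactly the nonseparable content of Haydon's theorem, and your proposal leaves it open.
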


\begin{proof}[Proof of Lemma~\ref{gen}]
Let $U$ be a $w^*$-open subset of $B_{X^*}$, which intersects 
$S_{X^*}$. By the Bishop-Phelps theorem, there exists $y^*\in S_{X^*}\cap U$ which attains its norm at some $y_0\in B_X$. Define a $w^*$-closed face of $B_{X^*}$ by
\begin{equation}\label{eq:definition-C}
 C\coloneqq\{x^*\in B_{X^*}\colon x^*(y_0)=1\}.   
\end{equation}
By the Krein-Milman theorem, $C=\overline{\text{co}}^{w^*}(\mathrm{Ext}(C))$. Since, by construction, $y^* \in C \cap U$, there exists an element in $\text{co}(\mathrm{Ext}(C)) \cap U$. We represent this element as a finite convex combination$$\sum_{i=1}^n \lambda_i e^*_i,$$where $e^*_i \in \mathrm{Ext}(C)$ and the coefficients $\lambda_i$ satisfy $\sum_{i=1}^n \lambda_i = 1$ with $\lambda_i \geq 0$ for all $i=1, \dots, n$.

Since $C$ is a face of $B_{X^*}$, we have $e_i^*\in \mathrm{Ext}(B_{X^*})$. Furthermore, we have$$\left\| \sum_{i=1}^n \lambda_i e_i^* \right\| \leq \sum_{i=1}^n \lambda_i = \sum_{i=1}^n \lambda_i e_i^*(y_0) \leq \left\| \sum_{i=1}^n \lambda_i e_i^* \right\|.$$As these inequalities must hold as equalities, it follows that $\sum_{i=1}^n \lambda_i e_i^* \in A$.

We now suppose that $X$ does not contain an isomorphic copy of $\ell_1$. For arbitrary $x^* \in S_{X^*}$ and $\varepsilon > 0$, the Bishop-Phelps theorem guarantees the existence of a functional $y^* \in S_{X^*}$ with $\|x^* - y^*\| < \frac{\varepsilon}{2}$ which attains its norm at some $y_0 \in S_X$.

Since the set $C$ defined in \eqref{eq:definition-C} is $w^*$-compact and convex, Lemma \ref{Hay} yields $C=\overline{\co}(\mathrm{Ext}(C))$.
Therefore, there exists a finite convex combination $$\sum_{i=1}^n \lambda_i e_i^*$$ with $e^*_i \in \mathrm{Ext}(C)$ and the coefficients $\lambda_i \geq 0$ satisfying $\sum_{i=1}^n \lambda_i = 1$, such that$$\left\| y^* - \sum_{i=1}^n \lambda_i e_i^* \right\| < \frac{\varepsilon}{2}.$$
    Thus, we have $\left\|x^*-\sum_{i=1}^n\lambda_i e_i^*\right\|<{\varepsilon}$. We finish the proof since $\sum_{i=1}^n\lambda_i e_i^*$ belongs to $A$ by the same argument as above.
\end{proof}

Taking into account \cite[Theorem 1.1]{ruessstegal} which shows that 
$$\mathrm{Ext}\left(B_{\mathcal{I}(X,Y^*)}\right)= \mathrm{Ext}\left(B_{(X\widehat\otimes_\varepsilon Y)^*}\right)= \bigl\{x^*\otimes y^*\colon x^*\in\mathrm{Ext}(B_{X^*}),~y^*\in\mathrm{Ext}(B_{Y^*})\bigr\},$$
we get the following result.

\begin{theorem}\label{cor:wstar-density}
    Let $X$ and $Y$ be Banach spaces.
    \begin{itemize}
        \item[(1)] $\mathrm{NA}_\mathrm{nu}(X,Y^*)$ is $w^*$-dense in $\mathcal{I}(X,Y^*)$.
        \item[(2)] If $X\widehat\otimes_\varepsilon Y$ does not contain any isomorphic copy of $\ell_1$, then $\mathrm{NA}_\mathrm{nu}(X,Y^*)$ is norm-dense in $\mathcal{I}(X,Y^*)$.
    \end{itemize}
\end{theorem}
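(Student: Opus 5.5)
We apply Lemma~\ref{gen} to the Banach space $Z\coloneqq X\widehat\otimes_\varepsilon Y$, whose dual is $Z^*\equiv\mathcal{I}(X,Y^*)$ isometrically, and combine it with the Ruess--Stegall description of $\mathrm{Ext}(B_{Z^*})$ recalled above. By that description, every element of the set $A$ from Lemma~\ref{gen} has the form
$$T=\sum_{i=1}^n\lambda_i\, x_i^*\otimes y_i^*,\qquad x_i^*\in\mathrm{Ext}(B_{X^*}),\ y_i^*\in\mathrm{Ext}(B_{Y^*}),\ \lambda_i\geq 0,\ \sum_{i=1}^n\lambda_i=\|T\|_{\mathcal I}=1.$$
Such a $T$ is a finite-rank, hence nuclear, operator from $X$ to $Y^*$. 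Using $\|\cdot\|_{\mathcal I}\leq\|\cdot\|_{\mathrm{nu}}$, we get
$$1=\|T\|_{\mathcal I}\leq\|T\|_{\mathrm{nu}}\leq\sum_{i=1}^n\lambda_i\|x_i^*\|\|y_i^*\|=\sum_{i=1}^n\lambda_i=1 .$$
So the displayed representation realizes the nuclear norm, and $T\in\mathrm{NA}_{\mathrm{nu}}(X,Y^*)$. Thus $A\subset\mathrm{NA}_{\mathrm{nu}}(X,Y^*)\cap S_{\mathcal I(X,Y^*)}$. This is the key step. The only point needing care is that extreme points of $B_{X^*}$ and $B_{Y^*}$ have norm one, which is automatic for nonzero spaces.

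Next we pass from the sphere to the whole space. The set $\mathrm{NA}_{\mathrm{nu}}(X,Y^*)$ is closed under multiplication by scalars: if $\sum_j\|a_j^*\|\|b_j^*\|$ attains $\|T\|_{\mathrm{nu}}$, then rescaling one factor in each term attains $\|tT\|_{\mathrm{nu}}=|t|\|T\|_{\mathrm{nu}}$. It also contains $0$.

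For (2) the argument is then immediate. Given $T\neq 0$ and $\varepsilon>0$, Lemma~\ref{gen}, applicable since $Z$ contains no copy of $\ell_1$, gives $S\in A$ with $\|T/\|T\|_{\mathcal I}-S\|_{\mathcal I}<\varepsilon/\|T\|_{\mathcal I}$. Then $\|T\|_{\mathcal I}S\in\mathrm{NA}_{\mathrm{nu}}(X,Y^*)$ lies within $\varepsilon$ of $T$.

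For (1) I expect the one mildly delicate step, because $w^*$-neighbourhoods do not rescale as conveniently as norm balls. Let $T\neq0$ and let $W$ be a basic $w^*$-neighbourhood of $T$ determined by finitely many $z_1,\dots,z_k\in Z$ and $\delta>0$. Put $r=\|T\|_{\mathcal I}$. Then $r^{-1}W$ is a $w^*$-open set containing $T/r\in S_{Z^*}$. Intersecting with $B_{Z^*}$ gives a relatively $w^*$-open subset of $B_{Z^*}$ meeting $S_{Z^*}$. The $w^*$-density of $A$ in $S_{Z^*}$, in the form actually proved in Lemma~\ref{gen} (every relatively $w^*$-open subset of $B_{X^*}$ meeting the sphere meets $A$), yields $S\in A\cap r^{-1}W$. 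Hence $rS\in W\cap\mathrm{NA}_{\mathrm{nu}}(X,Y^*)$. The case $T=0$ is trivial because $0$ attains its nuclear norm. Therefore $\mathrm{NA}_{\mathrm{nu}}(X,Y^*)$ is $w^*$-dense in $\mathcal I(X,Y^*)$.
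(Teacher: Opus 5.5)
Your proposal is correct and follows the paper's route: you apply Lemma~\ref{gen} to $X\widehat\otimes_\varepsilon Y$, use the Ruess--Stegall description of $\mathrm{Ext}(B_{\mathcal{I}(X,Y^*)})$ together with $\|\cdot\|_{\mathcal I}\leq\|\cdot\|_{\mathrm{nu}}$ to see that every element of $A$ attains its nuclear norm, and then rescale. The paper leaves both the nuclear-norm check and the passage from the unit sphere to the whole space implicit (including the $w^*$-rescaling in (1)), and you handle each of these correctly.
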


The next consequence of Lemma \ref{gen} will be performed in spaces of integral polynomials.
Taking into account that the inclusion
$$\mathrm{Ext}\left(B_{\mathcal{P}_{\mathcal{I}}\left(^NX\right)}\right)\subset\{\lambda\phi^N\colon \phi\in X^*,~\|\phi\|=1~\text{and}~\lambda\in\mathbb{K},\,|\lambda|=1\}$$
holds by \cite[Proposition 1]{BR}, we get the following theorem.

\begin{theorem}\label{cor:polywstar-density}
    Let $X$ be a Banach space and $N\in\mathbb{N}$. 
    \begin{itemize}
        \item[(1)] $\mathrm{NA}_\mathrm{nu}\left(^NX\right)$ is $w^*$-dense in $\mathcal{P}_{\mathcal{I}}\left(^NX\right)$.
        \item[(2)] If $\widehat\otimes_{\varepsilon,s,N}X$ does not contain any isomorphic copy of $\ell_1$, then $\mathrm{NA}_\mathrm{nu}\left(^NX\right)$ is norm-dense in $\mathcal{P}_{\mathcal{I}}\left(^NX\right)$.
    \end{itemize}
\end{theorem}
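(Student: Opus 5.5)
Apply Lemma~\ref{gen} to the Banach space $Z\coloneqq\widehat\otimes_{\varepsilon,s,N}X$, whose dual is identified with $\mathcal{P}_{\mathcal{I}}\left(^NX\right)$. The $w^*$-topology in the statement is $\sigma(\mathcal{P}_{\mathcal{I}}(^NX),Z)$, and $Z$ contains no copy of $\ell_1$ exactly in case (2). So both parts reduce to showing that the set $A$ of Lemma~\ref{gen} for this $Z$ lies in $\mathrm{NA}_\mathrm{nu}\left(^NX\right)$. Once that is done, positive homogeneity upgrades density in the sphere to density in the whole space. Indeed, if $P\neq 0$ and $P_\alpha\to P/\|P\|_{\mathcal I}$ with $P_\alpha\in A$, then $\|P\|_{\mathcal I}P_\alpha\to P$. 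These rescaled elements are still nuclear norm-attaining, since rescaling a minimal representation gives a minimal representation. The case $P=0$ is a limit of $t\,Q$ with $t\to 0$ and $Q\in A$ fixed.

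The key step is the inclusion $A\subset\mathrm{NA}_\mathrm{nu}(^NX)$. Take $P=\sum_{i=1}^n\lambda_iE_i$ with $E_i\in\mathrm{Ext}(B_{\mathcal{P}_\mathcal{I}(^NX)})$, $\lambda_i\geq0$, and $\|P\|_\mathcal{I}=\sum\lambda_i=1$. By \cite[Proposition 1]{BR}, each $E_i=\mu_i\phi_i^N$ with $\phi_i\in S_{X^*}$ and $|\mu_i|=1$. Hence $P=\sum_{i=1}^n(\lambda_i\mu_i)\phi_i^N$ is a finite nuclear representation, and
\[
\|P\|_\mathrm{nu}\leq\sum_{i=1}^n|\lambda_i\mu_i|=\sum_{i=1}^n\lambda_i=1=\|P\|_\mathcal{I}\leq\|P\|_\mathrm{nu},
\]
using the contractive inclusion $\|\cdot\|_\mathcal{I}\leq\|\cdot\|_\mathrm{nu}$ recalled in Subsection~\ref{Subsection 2}. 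All inequalities are therefore equalities. The infimum defining $\|P\|_\mathrm{nu}$ is attained by this representation (padded with zero coefficients if an infinite sequence is required), so $P\in\mathrm{NA}_\mathrm{nu}(^NX)$.

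The main point needing care is the identification itself. We need $\mathcal{P}_\mathcal{I}(^NX)\equiv Z^*$ isometrically, as quoted from \cite{BR}, with its integral norm agreeing with the dual norm. We also need the dual unit ball, whose extreme points Lemma~\ref{gen} uses, to be exactly $B_{\mathcal{P}_\mathcal{I}(^NX)}$, so that \cite[Proposition 1]{BR} applies. The extreme points need not all be of the form $\lambda\phi^N$ with equality; the inclusion direction is all we use. Given these facts, the argument is essentially formal.
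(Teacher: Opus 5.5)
Your proposal is correct and follows the paper's route. The paper applies Lemma~\ref{gen} to $\widehat\otimes_{\varepsilon,s,N}X$ and uses the extreme-point inclusion from \cite[Proposition 1]{BR} to see that each element of $A$ is a finite sum $\sum_i\lambda_i\mu_i\phi_i^N$ whose nuclear and integral norms coincide, so it lies in $\mathrm{NA}_\mathrm{nu}(^NX)$. The paper leaves implicit both the norm chain and the rescaling from the unit sphere to the whole space, and you supply them correctly.
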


A couple of remarks are pertinent.

\begin{rem}~\label{remark:sec2}
\begin{itemize}
\item[(1)] {\slshape Theorem \ref{cor:wstar-density}(1) improves \cite[Theorem 4.7]{DGJR}, which previously required the additional assumption that either $X^*$ or $Y^*$ has the RNP.}
\item[(2)] {\slshape Theorem \ref{cor:polywstar-density}(1) refines \cite[Theorem 3.11]{DGJR}, where the same conclusion was obtained under the condition that the symmetric tensor product $\widehat\otimes_{\varepsilon,s,N}X$ contains no isomorphic copy of $\ell_1$.}
\item[(3)]  In \cite[Theorem 2]{BR}, it is shown that $\mathcal{P}_{\mathcal{I}}\left(^NX\right)$ and $\mathcal{P}_\mathrm{nu}\left(^NX\right)$ are isometrically isomorphic if the symmetric tensor product $\widehat\otimes_{\varepsilon,s,N}X$ does not contain any isomorphic copy of $\ell_1$. A similar argument allows us to prove that {\slshape $\mathcal{I}(X,Y^*)$ and $\mathcal{N}(X,Y^*)$ are isometrically isomorphic whenever $X\widehat\otimes_\varepsilon Y$ contains no isomorphic copy of $\ell_1$. }
\item [(4)] In \cite[Proposition 5]{Ros2}, it is proved that  every integral operator in $\mathcal{I}(X,Y^*)$ can be approximated in norm by finite rank operators whenever  $X\widehat\otimes_\varepsilon Y$ does not contain any isomorphic copy of $\ell_1$. {\slshape Theorem \ref{cor:wstar-density}(2) provides the stronger conclusion that every integral operator  can be approximated in norm by finite rank operators in $\mathrm{NA}_\mathrm{nu}(X,Y^*)$.}
\end{itemize}
\end{rem}

To obtain the result regarding projective tensor products, we recall two well-known results. First, if either $X^*$ or $Y^*$ has the AP, then $X^* \widehat{\otimes}_\pi Y^*$ is isometrically isomorphic to $\mathcal{N}(X, Y^*)$ \cite[Chapter 4.1]{Rya}. Second, if $X^*$ has the AP, then $\mathcal{P}_\mathrm{nu}\left(^NX\right)$ and $\widehat{\otimes}_{\pi,s,N} X^*$ are isometrically isomorphic \cite[p. 20]{BR}. Consequently, the following corollary follows immediately from Theorem \ref{cor:wstar-density}(2), Theorem \ref{cor:polywstar-density}(2), and Remark \ref{remark:sec2}(3).

\begin{cor}\label{Corollary:onedualAP-NAtensors-wstar-dense}
    Let $X$ and $Y$ be Banach spaces such that $X^*$ has the AP. 
    \begin{itemize}
        \item[(1)] If $X\widehat\otimes_\varepsilon Y$ does not contain any isomorphic copy of $\ell_1$, then the set of norm-attaining elements  is norm-dense in $X^*\widehat\otimes_\pi Y^*$.
        \item[(2)] If $\widehat\otimes_{\varepsilon,s,N}X$ does not contain any isomorphic copy of $\ell_1$, then the set of norm-attaining elements  is norm-dense in $\widehat\otimes_{\pi,s,N}X^*$.
    \end{itemize}
\end{cor}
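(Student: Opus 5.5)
The proof composes three identifications, which all preserve the relevant norm-attainment notions, with Theorem~\ref{cor:wstar-density}(2) and Theorem~\ref{cor:polywstar-density}(2).

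\textbf{Part (1), linear case.} Assume $X\widehat\otimes_\varepsilon Y$ contains no copy of $\ell_1$.

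First, Remark~\ref{remark:sec2}(3) gives $\mathcal{I}(X,Y^*)\equiv\mathcal{N}(X,Y^*)$ isometrically. Hence the integral and nuclear norms agree, and norm-density of $\mathrm{NA}_\mathrm{nu}(X,Y^*)$ in $\mathcal{I}(X,Y^*)$, which is Theorem~\ref{cor:wstar-density}(2), is the same as norm-density in $\mathcal{N}(X,Y^*)$ with its nuclear norm.

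Second, since $X^*$ has the AP, Grothendieck's theorem makes $J\colon X^*\widehat\otimes_\pi Y^*\to\mathcal{N}(X,Y^*)$ an isometric isomorphism. I would then check that $J$ maps $\mathrm{NA}(X^*\widehat\otimes_\pi Y^*)$ onto $\mathrm{NA}_\mathrm{nu}(X,Y^*)$.
\begin{itemize}
\item If $J(z)=T$ and $T=\sum_i x_i^*\otimes y_i^*$ with $\sum_i\|x_i^*\|\|y_i^*\|=\|T\|_\mathrm{nu}$, then this series converges absolutely in $X^*\widehat\otimes_\pi Y^*$ to an element $w$ with $J(w)=T$.
\item Injectivity of $J$ gives $w=z$. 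So $z$ has a representation whose cost equals $\|z\|_\pi=\|T\|_\mathrm{nu}$, and $z$ attains its projective norm.
\item The converse direction is identical.
\end{itemize}
Since $J$ is a surjective isometry carrying one norm-attaining set onto the other, density transfers and part (1) follows.

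\textbf{Part (2), symmetric case.} This runs the same way. Remark~\ref{remark:sec2}(3), i.e.\ \cite[Theorem 2]{BR}, gives $\mathcal{P}_\mathcal{I}(^NX)\equiv\mathcal{P}_\mathrm{nu}(^NX)$ when $\widehat\otimes_{\varepsilon,s,N}X$ contains no copy of $\ell_1$. The AP of $X^*$ gives $\mathcal{P}_\mathrm{nu}(^NX)\equiv\widehat\otimes_{\pi,s,N}X^*$ via the canonical map $\sum_i\lambda_i\varphi_i^N\mapsto\sum_i\lambda_i\varphi_i(\cdot)^N$.

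Here norm-attainment in $\widehat\otimes_{\pi,s,N}X^*$ should be understood with respect to representations $\sum_i\lambda_i\varphi_i^N$ with $\|\varphi_i\|=1$. With that reading, the canonical isometry again carries optimal representations to optimal representations by the same absolute-convergence-plus-injectivity argument. Theorem~\ref{cor:polywstar-density}(2) then yields the density.

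\textbf{Main obstacle.} The only delicate point is this bookkeeping: confirming that an optimal nuclear representation really lifts to an optimal tensor representation. This uses that $J$ is injective, not merely isometric on the quotient, and that is exactly where the AP hypothesis enters.
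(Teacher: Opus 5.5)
Your proposal is correct and follows essentially the same route as the paper. It combines Theorem~\ref{cor:wstar-density}(2) and Theorem~\ref{cor:polywstar-density}(2) with the isometries between integral and nuclear spaces from Remark~\ref{remark:sec2}(3), and with the AP identifications $\mathcal{N}(X,Y^*)\equiv X^*\widehat\otimes_\pi Y^*$ and $\mathcal{P}_\mathrm{nu}(^NX)\equiv\widehat\otimes_{\pi,s,N}X^*$. The only addition is that you spell out a check the paper leaves implicit: because the canonical isometry is injective, it carries projective norm-attaining tensors exactly onto nuclear norm-attaining operators (resp.\ polynomials).
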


Finally, we construct an element in $\ell_p \widehat\otimes_\pi \ell_q$ which does not attain its projective norm for $1 < p, q < \infty$ satisfying $\frac{1}{p} + \frac{1}{q} < 1$. To the best of the authors knowledge, this is the first such example within the projective tensor product of reflexive spaces (and such that the tensor product is reflexive too, see \cite[Corollary 4.24]{Rya} for instance).

\begin{example}\label{Example}
    For $1< p,q<\infty$ with $\frac{1}{p}+\frac{1}{q}<1$, let $(e_i)_i$ and $(f_i)_i$ be the canonical basis of $\ell_p$ and $\ell_q$, respectively. Then, for $r=\frac{pq}{p+q}>1$, 
    $$z\coloneqq\sum_{i=1}^\infty {2^{-\frac{i}{r}}} e_i\otimes f_i\in \ell_p\widehat\otimes_\pi \ell_q$$
    does not attain its projective norm. In particular, $\NA\left(\ell_p \pten \ell_q\right)\neq \ell_p \widehat\otimes_\pi \ell_q$.
\end{example}

\begin{proof} Let $r'$, $p'$, and $q'$ be the conjugate exponents of $r$, $p$ and $q$, respectively, and let  $(e_i^*)_i$ and $(f_i^*)_i$ be the canonical basis of $\ell_{p'}$ and $\ell_{q'}$, respectively. 

Consider the diagonal projection $P\colon \ell_p\widehat\otimes_\pi\ell_q\to\ell_p\widehat\otimes_\pi\ell_q$ defined by
    $$P( x\otimes y)=\sum_{i=1}^\infty e_i^*(x)f_i^*(y) e_i\otimes f_i,\quad \forall(x,y)\in\ell_{p}\times\ell_{q}.$$
    Note that $P$ is a projection with norm $1$, and that the diagonal $P\left(\ell_{p}\widehat\otimes_\pi \ell_{q}\right)$ is isometrically isomorphic to $\ell_{r}$ by the canonical identification \cite[Theorem~1.3]{AF}. Likewise, $P^*$ is also the diagonal projection and, by the paragraph after Definition~2.1 in \cite{CDSV}, we see that the diagonal $P^*\left(\ell_{p'}\widehat\otimes_\varepsilon\ell_{q'}\right)$ is isometrically isomorphic to $\ell_{r'}$ since it holds that $$p>q'\Longleftrightarrow q>p'\Longleftrightarrow\frac{1}{p}+\frac{1}{q}<1.$$
    We first see that $\|z\|_\pi=1$. Indeed, we have
    \begin{align*}
        \|z\|_\pi&=\sup_{\mu\in S}|\mu(z)|=\sup_{\mu\in S}|\mu(Pz)|=\sup_{\nu\in P^*\left(S\right)}|\nu(z)|=\sup_{c\in S_{\ell_{r'}}}\left|c\left(\left(2^{-\frac{i}{r}}\right)_i\right)\right|=\left\|\left(2^{-\frac{i}{r}}\right)_i\right\|_r=1
    \end{align*}
where $S=S_{\ell_{p'}\widehat\otimes_\varepsilon\ell_{q'}}$.

   Assume for the sake of contradiction that there exists an optimal representation$$z = \sum_{j=1}^\infty u_j \otimes v_j$$ such that $\sum_{j=1}^\infty \Vert{}u_j\Vert{}_p \Vert{}v_j\Vert{}_q = \Vert{}z\Vert{}_\pi = 1$ for some $u_j \in \ell_p$ and $v_j \in \ell_q$. For a set 
   $$\Lambda=\{j\in \mathbb{N}~:~\Vert{}u_j\Vert{}_p \Vert{}v_j\Vert{}_q\neq0\},$$
   we also have the representations $z = \sum_{j\in \Lambda} u_j \otimes v_j$ and $ \Vert{}z\Vert{}_\pi=\sum_{j \in \Lambda} \Vert{}u_j\Vert{}_p \Vert{}v_j\Vert{}_q$.
   
 Define an element $\zeta$ by
    $$\zeta=\sum_{i=1}^\infty2^{-\frac{i}{r'}}e^*_i\otimes f_i^*\in P^*\left(\ell_{p'}\widehat\otimes_\varepsilon\ell_{q'}\right).$$
    Since it holds that
    $$\|\zeta\|_\varepsilon=\left\|\left(2^{-\frac{i}{r'}}\right)_i\right\|_{r'}=1~\text{and}~\zeta(z)=\sum_{i=1}^\infty2^{-\frac{i}{r}-\frac{i}{r'}}=1,$$
    $\zeta$ attains its norm at $z$ as a functional on $\ell_p\widehat\otimes_\pi\ell_q$. Therefore, by \cite[Theorem 3.1]{DJRR} and the (generalized) H\"older inequality, we have 
    \begin{align}
        \|u_j\|_p\|v_j\|_q&=\zeta(u_j\otimes v_j) =\sum_{i=1}^\infty 2^{-\frac{i}{r'}} e_i^*(u_j)f_i^*(v_j)\leq\sum_{i=1}^\infty 2^{-\frac{i}{r'}}|e_i^*(u_j)||f_i^*(v_j)|\nonumber\\
        &\leq \left\|(2^{-\frac{i}{r'}})_i\right\|_{r'} \left\|\left(|e_i^*(u_j)||f_i^*(v_j)|\right)_i\right\|_r\leq \|u_j\|_p\|v_j\|_{q}.\label{eq:holdr}
    \end{align}
for every $j\in \Lambda$. Since all the inequalities in \eqref{eq:holdr} hold with equality, we have
\begin{equation*}
|e_i^*(u_j)| = 2^{-\frac{i}{p}} \|u_j\|_p \quad \text{and} \quad |f_i^*(v_j)| = 2^{-\frac{i}{q}} \|v_j\|_q
\end{equation*}
for every $i\in \mathbb{N}$ and $j\in \Lambda$. Indeed, together with the equality condition for the H\"older inequality \cite[p.~65]{Rud}, the first equality in \eqref{eq:holdr} implies
$$|e_i^*(u_j)||f_i^*(v_j)|=2^{-\frac{i}{r}}\|u_j\|_p\|v_j\|_q,$$
and the second one gives the existence of $\alpha>0$ such that
$$|e_i^*(u_j)|=\alpha |f_i^*(v_j)|^\frac{q}{p}$$
for every $i\in \mathbb{N}$ and $j\in \Lambda$. Combining these two, we get $$2^{-\frac{i}{r}}\|u_j\|_p\|v_j\|_q=\alpha |f_i^*(v_j)|^{\frac{q}{p}+1}=\alpha|f_i^*(v_j)|^{\frac{q}{r}}$$ 
which leads us to get
 $$|f_i^*(v_j)|=\left(\alpha^{-1}\|u_j\|_p\|v_j\|_q\right)^{\frac{r}{q}} 2^{-\frac{i}{q}}\quad\text{and}\quad|e_i^*(u_j)|=\left(\alpha^{-1}\|u_j\|_p\|v_j\|_q\right)^\frac{r}{p}2^{-\frac{i}{p}}.$$
Consequently, we have 
$$\|v_j\|_q^q=\sum_{i=1}^\infty |f_i^*(v_j)|^q=\left(\alpha^{-1}\|u_j\|_p\|v_j\|_q\right)^r\quad\text{and}\quad\|u_j\|_p^p=\sum_{i=1}^\infty |e_i^*(u_j)|^p=\left(\alpha^{-1}\|u_j\|_p\|v_j\|_q\right)^r$$
which shows
$$
|f_i^*(v_j)| = 2^{-\frac{i}{q}} \|v_j\|_q \quad \text{and} \quad |e_i^*(u_j)| = 2^{-\frac{i}{p}} \|u_j\|_p
$$
as desired.

Hence, the original representation $\sum_{i=1}^\infty {2^{-\frac{i}{r}}} e_i\otimes f_i$ of $z$ and the other one 
    \begin{align*}
        z=\sum_{j\in \Lambda} u_j\otimes v_j=\sum_{(k,l)}\left(\sum_{j\in \Lambda} e^*_k(u_j) f^*_l(v_j)\right) e_k\otimes f_l
    \end{align*}
 with respect to the canonical basis show that
\begin{align*}
&e^*_k\otimes f^*_l(z)=\sum_{i=1}^\infty {2^{-\frac{i}{r}}}e^*_k(e_i) f^*_l(f_i)={2^{-\frac{k}{r}}}\delta_{kl} \quad \text{and}\\
&e^*_k\otimes f^*_l(z)=\sum_{j\in \Lambda} e^*_k(u_j) f^*_l(v_j)=\sum_{j=1}^\infty 2^{-\frac{k}{p}-\frac{l}{q}}\|u_j\|_p\|v_j\|_q\varepsilon_{kj}\overline{\varepsilon_{lj}}
\end{align*}
for each $(k,l)\in \mathbb{N}^2$ where  $\delta_{kl}$ is the Kronecker delta and $\varepsilon_{ij}$ is defined by $$\varepsilon_{ij} \coloneqq \begin{cases} \mathrm{sign}(e_i^*(u_j)) & \text{if } (i,j) \in \mathbb{N} \times \Lambda, \\ 0 & \text{otherwise.} \end{cases}.$$
Note that $\varepsilon_{ij}=\overline{\mathrm{sign}(f_i^*(v_j))}$ whenever $j\in \Lambda$ since $e_i^*(u_j)f_i^*(v_j)=|e_i^*(u_j)||f_i^*(v_j)|$.

From $\sum_{j=1}^\infty \|u_j\|_p\|v_j\|_q=\sum_{j\in \Lambda} \|u_j\|_p\|v_j\|_q=1$, we see that $G_i\coloneqq(\sqrt{\|u_j\|_p\|v_j\|_q}\varepsilon_{ij})_{j}$ is an element in $\ell_2$ for all $i\in\mathbb{N}$. Moreover, it holds that
$$\left\langle G_k,G_l\right\rangle=\sum_{j=1}^\infty \|u_j\|_p\|v_j\|_q\varepsilon_{kj}\overline{\varepsilon_{lj}}=2^{\frac{k}{p}+\frac{l}{q}}e^*_k\otimes f^*_l(z)= {2^{-\frac{k}{r}}}2^{\frac{k}{p}+\frac{l}{q}}\delta_{kl}\quad \forall (k,l)\in \mathbb{N}^2$$
which implies$(G_i)_i$ is an orthonormal system in $\ell_2$. For the canonical basis $(g_{i})_i$ of $\ell_2$ and for any fixed $j_0\in \Lambda$, by the Bessel inequality \cite[Proposition~1.37]{FHHMPZ}, we have
\begin{equation}
    \sum_{k=1}^\infty|\langle g_{j_0},G_k\rangle|^2\leq \|g_{j_0}\|=1.\label{eq:contrad}
\end{equation}
On the other hand, for every $k\in \mathbb{N}$, we have$$\left\vert{} \langle g_{j_0}, G_k \rangle \right\vert{} = \left\vert{} \left\langle g_{j_0}, \left(\sqrt{\Vert{}u_j\Vert{}_p \Vert{}v_j\Vert{}_q} \varepsilon_{kj}\right)_{j \in \Lambda} \right\rangle \right\vert{} = \sqrt{\Vert{}u_{j_0}\Vert{}_p \Vert{}v_{j_0}\Vert{}_q}.$$
Thus, the left-hand side of \eqref{eq:contrad} diverges, which yields a contradiction.
\end{proof}

\begin{rem}\label{rem:mentionjung1}
The authors were recently aware of the recent preprint \cite{jung26}, where M. Jung proved that every infinite dimensional Banach space $X$ admits an equivalent renorming for which $X\pten X$ contains an element that does not attain its projective norm \cite[Corollary B]{jung26}.
\end{rem}

\section{II-polyhedral Banach spaces and nuclear norm-attainment}\label{Section 4}

Our main aim in this section is to show that $\mathrm{NA}\left(X^*\widehat\otimes_\pi Y^*\right) = X^*\widehat\otimes_\pi Y^*$ if $X$ is II-polyhedral and one of the spaces $X^*$ and $Y^*$ has the AP, and to provide consequences of this result. The methods are necessarily different from those in Section~\ref{Section 3}: while the density results there rely on convex combinations of extreme points, full norm-attainment requires a measure-theoretic analysis of integral representations.

\begin{definition}\label{defradonmeasureoperator}For $N\in \mathbb{N}$, let $\{X_i\}_{i=0}^N$ be a collection of Banach spaces, and let a subset $K \subset \prod_{i=0}^N B_{X_i^*}$ be compact in the product $w^*$-topology $\prod_{i=0}^N w^*$.
    A bounded $(N+1)$-multilinear form $T\in\mathcal{B}\left(\prod_{i=0}^N X_i\right)$ is said to be \emph{represented by a Radon measure} $\mu$ on $K$ if
    $$T\left((x_i)_{i=0}^N\right)=\int_{K} \prod_{i=0}^N x^*_i(x_i)\,d\mu\left((x_i^*)_{i=0}^N\right)\quad\forall (x_i)_{i=0}^N\in \prod_{i=0}^N B_{X_i^*}.$$
    Conversely, given a Radon measure $\mu$ on $K$, we denote by $T_\mu$ the $(N+1)$-multilinear form represented by $\mu$.
\end{definition}

For the measure $\mu$ in Definition \ref{defradonmeasureoperator}, we simply write $d\mu$ instead of $d\mu((x_i^* )_{i=0}^N)$ whenever there is no risk of confusion. We denote by $\tilde{\mu}$ the canonical extension of $\mu$ to the entire space $\prod_{i=0}^N B_{X_i^*}$, defined by$$\tilde{\mu}(E) = \mu\left(E \cap K\right)$$for any Borel set $E \subset \left(\prod_{i=0}^N B_{X_i^*}, \prod_{i=0}^N w^*\right)$. It is straightforward to see that $\mu$ and its canonical extension $\tilde{\mu}$ represent the same multilinear form. Thus, we will freely replace $\tilde{\mu}$ with $\mu$ whenever it is needed, without explicit mention. It also gives that $T_\mu \in \mathcal{I}\left(\prod_{i=0}^N{X_i}\right)$.

As a technical tool, we show that the support of a given Radon measure cannot be contained in a strictly smaller ball whenever its total variation coincides with the integral norm of the corresponding multilinear form.

\begin{lemma}\label{basic2}For $N\in \mathbb{N}$, let $\{X_{i}\}_{i=0}^N$ be a collection of Banach spaces, $r\in (0,1)$ be a real number, and $\mu$ be a Radon measure on $\left(\prod_{i=0}^N B_{X_i^*},\prod_{i=0}^N w^*\right)$ whose support is contained in $\left(rB_{X_{i_0}^*}\right)\times \prod_{i\neq i_0}B_{X_i^*}$, for some $0\leq i_0 \leq N$. If $T_\mu$ satisfies that $\|T_\mu\|_{\mathcal{I}}=|\mu|_\text{TV}$, then $\mu$ is the zero measure.
\end{lemma}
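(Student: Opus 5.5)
The plan is to push $\mu$ forward under a rescaling map that multiplies the $i_0$-th coordinate by $1/r$. The new measure represents the same multilinear form with total variation scaled by $r$, which contradicts the minimality $\|T_\mu\|_{\mathcal I}=|\mu|_{TV}$ unless $\mu=0$.

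Define
$$\Psi\colon \left(rB_{X_{i_0}^*}\right)\times \prod_{i\neq i_0}B_{X_i^*}\to \prod_{i=0}^N B_{X_i^*}$$
by replacing the $i_0$-th coordinate $x_{i_0}^*$ with $r^{-1}x_{i_0}^*$ and leaving the other coordinates unchanged. Its domain $K$ is $w^*$-compact. The map $\Psi$ is continuous for the product $w^*$-topologies, since scalar multiplication is $w^*$-continuous, and it lands in the product of unit balls.

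Since $\mu$ is supported in $K$, we may regard $\mu$ as a Radon measure on $K$. We then set
$$\nu\coloneqq r\,\Psi_\#\mu, \qquad \text{that is,}\qquad \nu(E)=r\,\mu\bigl(\Psi^{-1}(E)\bigr).$$
Because $\Psi$ is a continuous map between compact Hausdorff spaces, the image measure $\nu$ is again a Radon measure, for instance via the Riesz representation of $f\mapsto r\int_K f\circ\Psi\,d\mu$.

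For every $(x_i)_i$, the change of variables formula together with multilinearity in the $i_0$-th slot gives
$$\int \prod_i x_i^*(x_i)\,d\nu = r\int_K \prod_{i\neq i_0}x_i^*(x_i)\; r^{-1}x_{i_0}^*(x_{i_0})\,d\mu = T_\mu\bigl((x_i)_i\bigr).$$
Hence $\nu$ represents $T_\mu$.

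On the other hand, $|\Psi_\#\mu|_{TV}\le|\mu|_{TV}$; in fact equality holds because $\Psi$ is injective, but the inequality suffices. Combining this with the previous paragraph,
$$\|T_\mu\|_{\mathcal I}\le|\nu|_{TV}\le r|\mu|_{TV}=r\|T_\mu\|_{\mathcal I}.$$
Since $r<1$, this forces $\|T_\mu\|_{\mathcal I}=0$, and therefore $|\mu|_{TV}=0$, that is, $\mu=0$.

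The only delicate point is the measure-theoretic bookkeeping: restricting $\mu$ to its support and checking that the pushforward is Radon with $|\Psi_\#\mu|_{TV}\le|\mu|_{TV}$. The inequality follows from $|\Psi_\#\mu|\le\Psi_\#|\mu|$, so I expect this step to be routine rather than a real obstacle. If $r=0$ were allowed the map $\Psi$ would be undefined, but the hypothesis $r\in(0,1)$ excludes this case.
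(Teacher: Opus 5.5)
Your proof is correct and follows essentially the same route as the paper: the paper's measure $\nu(E)=\mu(H(E))$, where $H$ multiplies the $i_0$-th coordinate by $r$, is exactly your pushforward $\Psi_\#\mu$. The only difference is normalization: the paper's $\nu$ represents $r^{-1}T_\mu$, while you rescale by $r$ so that your $\nu$ represents $T_\mu$ itself, and both arguments then conclude from $\|T_\mu\|_{\mathcal I}\le r\,\|T_\mu\|_{\mathcal I}$ with $r<1$.
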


\begin{proof} Without loss of generality, we may assume that $i_0=0$. Let $H \colon \prod_{i=0}^N B_{X_i^*} \to rB_{X^*_0}\times\prod_{i=1}^N B_{X_i^*}$ be the map defined by$$H(x_0^*, x_1^*, \dots, x_N^*) = (rx_0^*, x_1^*, \dots, x_N^*)$$
for each $\left(x_0^*,x_1^*,\cdots,x_N^*\right)\in \prod_{i=0}^N B_{X_i^*}$.
We then define a measure $\nu$ on $\left(\prod_{i=0}^N B_{X_i^*},\prod_{i=0}^N w^*\right)$ by
$$\nu(E)=\mu(H(E))$$ for any Borel set $E \subset \left(\prod_{i=0}^N B_{X_i^*}, \prod_{i=0}^N w^*\right)$.

We first show 
$$\int_{\prod_{i=0}^N B_{X_i^*}} \chi_E\left(r^{-1}x_0^*,x_1^*,\cdots,x_N^*\right) \, d\mu=\int_{\prod_{i=0}^N B_{X_i^*}} \chi_E\left(x_0^*,\cdots,x_N^*\right) \, d\nu$$
for any Borel set $E \subset \left(\prod_{i=0}^N B_{X_i^*},\prod_{i=0}^N w^*\right)$ where $\chi_E$ is the characteristic function of $E$ defined on $r^{-1}B_{X^*_{0}}\times\prod_{i=1}^N B_{X_i^*}$.

 Indeed, it holds that 
$$\chi_{E}\left(r^{-1}x_0^*,x_1^*,\cdots,x_N^*\right)=\chi_{H(E)}\left(x_0^*,x_1^*,\cdots,x_N^*\right)$$
for any $\left(x_0^*,x_1^*,\cdots,x_N^*\right)\in \prod_{i=0}^N B_{X_i^*}$. Hence, we have that 

\begin{align*}
\int_{\prod_{i=0}^N B_{X_i^*}} \chi_E\left(r^{-1}x_0^*,x_1^*,\cdots,x_N^*\right) \, d\mu
&=\int_{\prod_{i=0}^N B_{X_i^*}} \chi_{H(E)}\left(x_0^*,x_1^*,\cdots,x_N^*\right) \, d\mu\\
&=\mu(H(E))=\nu(E)=\int_{\prod_{i=0}^N B_{X_i^*}} \chi_E\left(x_0^*,x_1^*,\cdots,x_N^*\right) \, d\nu.
\end{align*}
This leads us to get
$$ \int_{\prod_{i=0}^N B_{X_i^*}} r^{-1}\prod_{i=0}^Nx_i^*(x_i)\, d\mu=\int_{\prod_{i=0}^N B_{X_i^*}} \prod_{i=0}^Nx_i^*(x_i)\, d\nu$$
which is equivalent to 
$$r^{-1} T_\mu =T_\nu.$$
Therefore, we see that 
$$|\mu|_{TV}=\left\|T_\mu\right\|_{\mathcal{I}}=r\left\|T_\nu\right\|_{\mathcal{I}}\leq r|\nu|_{TV}.$$
Since $|\mu|_{TV}=|\nu|_{TV}$ by the construction of $\nu$, we get 
$\left\|T_\mu\right\|_{\mathcal{I}}=|\mu|_{TV}=0$.
\end{proof}

We are now ready to present the main result of this section in a general setting, from which the main theorem will be deduced.

\begin{theorem}\label{main}For $N\in\mathbb{N}$,  let $\{X_i\}_{i=0}^N$ be a collection of Banach spaces, and suppose $X_0$ is II-polyhedral.
If
$$\mathcal{N}\left(\prod_{i=1}^N {X_i}\right)\equiv\mathcal{I}\left(\prod_{i=1}^N{X_i}\right)~\text{isometrically and}~\mathcal{N}\left(\prod_{i=1}^N {X_i}\right)=\mathrm{NA}_\mathrm{nu}\left(\prod_{i=1}^N {X_i}\right),$$
then we have 
$$\mathcal{N}\left(\prod_{i=0}^N {X_i}\right)\equiv\mathcal{I}\left(\prod_{i=0}^N{X_i}\right)~\text{isometrically and}~\mathcal{N}\left(\prod_{i=0}^N {X_i}\right)=\mathrm{NA}_\mathrm{nu}\left(\prod_{i=0}^N {X_i}\right).$$
\end{theorem}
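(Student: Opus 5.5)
The plan is to take an arbitrary $T\in\mathcal{I}\left(\prod_{i=0}^N X_i\right)$ and exhibit a nuclear representation of $T$ whose cost is exactly $\|T\|_\mathcal{I}$. Since $\|\cdot\|_\mathcal{I}\leq\|\cdot\|_\mathrm{nu}$ always holds, such a representation gives both conclusions at once: $T$ is nuclear with $\|T\|_\mathrm{nu}=\|T\|_\mathcal{I}$, and $T$ attains its nuclear norm.

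\textbf{Step 1 (an optimal measure).} The set of Radon measures representing $T$ with total variation at most $\|T\|_\mathcal{I}+1$ is $w^*$-compact in $\mathcal{M}\left(\prod_{i=0}^N B_{X_i^*}\right)$. The condition of representing $T$ is $w^*$-closed, because each function $\prod_i x_i^*(x_i)$ is continuous. The total variation is $w^*$-lower semicontinuous. Hence there is a measure $\mu$ representing $T$ with $|\mu|_\mathrm{TV}=\|T\|_\mathcal{I}$.

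\textbf{Step 2 (splitting the first coordinate).} Let $r<1$ be the II-polyhedrality constant of $X_0$. Put $E=\overline{\mathrm{Ext}(B_{X_0^*})}^{w^*}$. By hypothesis, $E\setminus\mathrm{Ext}(B_{X_0^*})\subset rB_{X_0^*}$, so the set $D=\mathrm{Ext}(B_{X_0^*})\setminus rB_{X_0^*}$ has no $w^*$-cluster points outside $rB_{X_0^*}$. In particular, every $w^*$-compact subset of $B_{X_0^*}\setminus rB_{X_0^*}$ meets $D$ in a finite set, so $D$ is countable up to a $\mu$-null set, and $\mu$ restricted to $D\times\prod_{i\geq1}B_{X_i^*}$ lives on countably many slices $\{e^*\}\times\prod_{i\geq1}B_{X_i^*}$.

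I also need to reduce $\mu$ to a measure supported on $E\times\prod_{i\geq1}B_{X_i^*}$ without increasing the total variation. The route I would take is a Choquet-type disintegration in the first variable: replace each $x_0^*$ by a probability measure on $E$ with barycenter $x_0^*$, and integrate. Linearity in $x_0^*$ ensures the new measure still represents $T$, and its total variation does not increase, so it is again optimal. I expect this to be the main obstacle. Choquet representations of a point need not vary measurably in general, so I would first try to use a measurable selection of representing measures, or a decomposition of $\mu$ into pieces on which such a selection is possible.

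Once the support lies in $E\times\prod_{i\geq1}B_{X_i^*}$, write $\mu=\mu_1+\mu_2$. Here $\mu_1$ is the restriction to $D\times\prod_{i\geq1}B_{X_i^*}$, and $\mu_2$ is the restriction to $rB_{X_0^*}\times\prod_{i\geq1}B_{X_i^*}$. The two pieces have disjoint supports, so
$$\|T\|_\mathcal{I}=|\mu|_\mathrm{TV}=|\mu_1|_\mathrm{TV}+|\mu_2|_\mathrm{TV}\geq\|T_{\mu_1}\|_\mathcal{I}+\|T_{\mu_2}\|_\mathcal{I}\geq\|T\|_\mathcal{I}.$$
All these inequalities are equalities, so in particular $\|T_{\mu_2}\|_\mathcal{I}=|\mu_2|_\mathrm{TV}$. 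Lemma~\ref{basic2} then forces $\mu_2=0$.

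\textbf{Step 3 (atomization in the first variable).} Now $\mu=\sum_k\mu^{(k)}$, where $\mu^{(k)}$ is supported on $\{e_k^*\}\times\prod_{i\geq1}B_{X_i^*}$ with $e_k^*\in\mathrm{Ext}(B_{X_0^*})$. Let $\nu_k$ be the measure on $\prod_{i\geq1}B_{X_i^*}$ obtained by projecting $\mu^{(k)}$. Then
$$T\left((x_i)_{i=0}^N\right)=\sum_k e_k^*(x_0)\,S_k\left((x_i)_{i=1}^N\right),\qquad S_k\coloneqq T_{\nu_k}\in\mathcal{I}\left(\prod_{i=1}^N X_i\right).$$
Applying the same equality chain as in Step 2 to this countable disjoint decomposition gives $\|S_k\|_\mathcal{I}=|\nu_k|_\mathrm{TV}$ and $\sum_k\|S_k\|_\mathcal{I}=\|T\|_\mathcal{I}$.

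By the hypothesis on the $N$-fold product, each $S_k$ is nuclear, $\|S_k\|_\mathrm{nu}=\|S_k\|_\mathcal{I}$, and $S_k$ attains its nuclear norm. So $S_k=\sum_j\prod_{i\geq1}x_{i,j,k}^*$ with $\sum_j\prod_i\|x_{i,j,k}^*\|=\|S_k\|_\mathcal{I}$. Combining these,
$$T=\sum_{k,j}e_k^*\otimes x_{1,j,k}^*\otimes\cdots\otimes x_{N,j,k}^*,$$
and this representation has cost $\sum_k\|S_k\|_\mathcal{I}=\|T\|_\mathcal{I}\leq\|T\|_\mathrm{nu}$. This proves both claims.
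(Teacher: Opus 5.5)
There is a genuine gap in Step 2: the reduction to an optimal measure concentrated on $E\times\prod_{i\geq1}B_{X_i^*}$, which you yourself flag as the main obstacle, is never actually carried out. Everything else is correct and matches the paper's Claims 1--4: Step 1, the splitting $\mu=\mu_1+\mu_2$ with Lemma~\ref{basic2}, the countability of the slices, and the atomization in Step 3.

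The reduction cannot be skipped. An optimal measure on the full product may charge non-extreme points of norm one, and then Lemma~\ref{basic2} gives nothing. For instance, take $X_0=\ell_\infty^2$ (which is II-polyhedral with $r=0$) and $T=\frac12(e_1^*+e_2^*)\otimes S$. Then $\delta_{\frac12(e_1^*+e_2^*)}\times\nu$, with $\nu$ optimal for $S$, is an optimal representing measure living outside $E\cup rB_{X_0^*}$. Your Choquet disintegration would need a choice, measurable in $x_0^*$, of a probability measure on $E$ with barycenter $x_0^*$. You do not supply one, and since $(B_{X_0^*},w^*)$ is in general non-metrizable, no standard measurable selection theorem provides it.

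The missing idea is that no disintegration is needed. Fix $z$ in the algebraic tensor product and $(x_1^*,\dots,x_N^*)$. The map $x_0^*\mapsto\Phi(z)(x_0^*,x_1^*,\dots,x_N^*)$ is evaluation at the element $\sum_k\bigl(\prod_{i\geq1}x_i^*(x_{i,k})\bigr)x_{0,k}$ of $X_0$. By Krein--Milman, the supremum of its modulus over $B_{X_0^*}$ is attained at a point of $\mathrm{Ext}(B_{X_0^*})$. By density, $\widehat\otimes_{\varepsilon,i=0}^N X_i$ therefore embeds isometrically into $C\bigl(K\times\prod_{i\geq1}B_{X_i^*}\bigr)$. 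Here $K=\mathrm{Ext}(B_{X_0^*})\cup rB_{X_0^*}$, which is $w^*$-compact by II-polyhedrality; your $E$ works as well. The Hahn--Banach and Riesz theorems then directly give a measure on $K\times\prod_{i\geq1}B_{X_i^*}$ representing $T$ with total variation $\|T\|_\mathcal{I}$. This is exactly how the paper begins its proof. If you use it in place of the disintegration, the rest of your argument goes through as written, and your Step 1 becomes unnecessary.
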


\begin{proof}
By the assumption, there exists a real number $r\in [0,1)$ such that 
$$\overline{\mathrm{Ext}(B_{X_0^*})}^{w^*} \subset \mathrm{Ext}(B_{X_0^*})\cup r B_{X_0^*}.$$
Define a $w^*$-compact set
$$K_{X^*_0}=\mathrm{Ext}(B_{X_0^*})\cup r B_{X_0^*}.$$
Since $\widehat{\otimes}_{\varepsilon,i=0}^N X$ is contained in $C\left(K_{X^*_0}\times \prod_{i=1}^N B_{X^*_i},\prod_{i=0}^N w^*\right)$ isometrically, for an integral multilinear form
$$T\in\mathcal{I}\left(\prod_{i=0}^N {X_i}\right)=\left(\widehat{\otimes}_{\varepsilon,i=0}^N X_i\right)^*,$$
there exists a Radon measure $\mu_{T}$ on $\left(K_{X^*_0} \times \prod_{i=1}^N B_{X^*_i},\prod_{i=0}^N w^*\right)$ such that 
$$T_{\mu_T}=T\quad\text{and}\quad\left|\mu_T\right|_\text{TV}=\|T\|_\mathcal{I}$$
 by the Hahn-Banach theorem.

~\\

\emph{Claim 1}. $\mu_T$ is supported on $\mathrm{Ext}(B_{X_0^*})\times\prod_{i=1}^N B_{X^*_i}$.\\

Since it is clear when $r=0$, we assume that $r>0$. 
Define measures $\mu_T^i$ ($i=1,2$) by restrictions
$$\mu_T^1= \mu_T\big|_{\mathrm{Ext}(B_{X_0^*})\times \prod_{i=1}^N B_{X^*_i}}\quad\text{and}\quad \mu_T^2=\mu_T \big|_{rB_{X_0^*}\times \prod_{i=1}^N B_{X^*_i}}.$$
Note that $\mathrm{Ext}(B_{X_0^*})\times \prod_{i=1}^N B_{X^*_i}$ is open (since every point $x^*\in \mathrm{Ext}(B_{X_0^*})$ is isolated in $K_{X^*_0}$) and $rB_{X_0^*}\times \prod_{i=1}^N B_{X^*_i}$ is closed.
Since the supports of $\mu_T^i$ are disjoint, $\mu_T$ and $\left|\mu_T\right|_\text{TV}$ can be expressed as
$$\mu_T=\mu_T^1+\mu_T^2~\text{and}~\left|\mu_T\right|_\text{TV}=\left|\mu_T^1\right|_\text{TV}+\left|\mu_T^2\right|_\text{TV}.$$

This gives that 
$$\|T\|_\mathcal{I}\leq \left\|T_{\mu_T^{1}}\right\|_\mathcal{I}+\left\|T_{\mu_T^{2}}\right\|_\mathcal{I}\leq \left|\mu_T^1\right|_\text{TV}+\left|\mu_T^2\right|_\text{TV}=\left|\mu_T\right|_\text{TV}=\|T\|_\mathcal{I}.$$
Hence, we have, for $i=1,2$,
$$\left\|T_{\mu_T^{i}}\right\|_\mathcal{I}=\left|\mu_T^i\right|_\text{TV}.$$
Thanks to Lemma \ref{basic2}, we see that $\mu_T^2=0$ which gives 
$$\mu_T=\mu_T^1.$$\\

\emph{Claim 2}. There exists a countable set $\Lambda\subset \mathrm{Ext}(B_{X_0^*})$ such that, for each $x^*\in \Lambda$, there exists a Radon measure $\mu_{x^*}$ on $\left(K_{X^*_0}\times \prod_{i=1}^N B_{X^*_i},\prod_{i=0}^N w^*\right)$ which is supported on $\{{x^*}\}\times \prod_{i=1}^N B_{X^*_i}$ satisfying
$$\mu_T=\sum_{{x^*}\in\Lambda} \mu_{x^*},\quad |\mu_T|_\text{TV}=\sum_{{x^*}\in\Lambda}\left|{\mu_{x^*}}\right|_\text{TV}, \quad\text{and}\quad\left\|T_{\mu_{x^*}}\right\|_{\mathcal{I}}=\left|\mu_{x^*}\right|_\text{TV}~\quad\forall{x^*}\in\Lambda.$$

Since every element of $\mathrm{Ext}(B_{X_0^*})$ is isolated in $K_{X^*_0}$, the set
$$\{x^*\}\times \prod_{i=1}^N B_{X^*_i}$$ 
is clopen in $K_{X^*_0}\times \prod_{i=1}^N B_{X^*_i}$ for every $x^*\in \mathrm{Ext}(B_{X_0^*})$, and all such sets are pairwise disjoint.

Since $\mu_T$ is a Radon measure, $\left|\mu_T\right|$ is also a Radon measure (see \cite[P222 Proposition 7.16]{Fol} for instance). 
Therefore, for each $n\in\mathbb{N}$, there exists a compact subset 
$$K_n\subset \mathrm{Ext}(B_{X_0^*})\times \prod_{i=1}^N B_{X^*_i}$$
such that 
$$\left|\mu_T\right|_\text{TV}-\frac{1}{n}\leq \left|\mu_T\right|(K_n).$$
Since the set $\left\{\{x^*\}\times \prod_{i=1}^N B_{X^*_i} \colon x^*\in \mathrm{Ext}(B_{X_0^*})\right\}$ is an open cover of $K_n$, there exists a finite set $F_n\subseteq \mathrm{Ext}(B_{X_0^*})$ such that
$$\left\{\{{x^*}\}\times \prod_{i=1}^N B_{X^*_i} \colon  {x^*}\in F_n\right\}$$
is a cover of $K_n$.
For a countable set $\Lambda:=\bigcup\limits_{n\in\mathbb N} F_n$, define a measure $\mu_{x^*}$ for every ${x^*}\in \Lambda$ by
\begin{equation*}
\mu_{x^*}(E):=\mu_T\left(E\cap\left({\{{x^*}\}\times \prod_{i=1}^N B_{X^*_i}}\right)\right),
\end{equation*}
for each Borel set $E\subset \left(K_{X^*_0}\times \prod_{i=1}^N B_{X^*_i},\prod_{i=0}^N w^*\right)$.
It is obvious that
$$\left|\mu_T\right|_\text{TV}=\sum_{{x^*}\in \Lambda}\left|\mu_T\right|\left(\{{x^*}\}\times \prod_{i=1}^N B_{X^*_i}\right)=\sum_{{x^*}\in\Lambda}|\mu_{x^*}|_\text{TV}.$$
Hence, we also deduce
$$\mu_T=\sum_{{x^*}\in\Lambda} \mu_{x^*}\quad\text{and}\quad \|T\|_\mathcal{I}\leq \sum_{{x^*}\in \Lambda} \left\|T_{\mu_{x^*}}\right\|_{\mathcal{I}}\leq \sum_{{x^*}\in \Lambda}\left|\mu_{x^*}\right|_\text{TV}=\left|\mu_T\right|_\text{TV}=\|T\|_\mathcal{I} $$
which proves Claim 2.\\

\emph{Claim 3}. For every ${x^*}\in \Lambda$, there exists a Radon measure $\nu_{x^*}$ on  $\left(\prod_{i=1}^N B_{X_i^*},\prod_{i=1}^N w^*\right)$ such that 
$$\mu_{x^*} = \delta_{x^*}\times \nu_{x^*}\quad \text{and}\quad\left|\nu_{x^*}\right|_\text{TV}=\left|\mu_{x^*}\right|_\text{TV}.$$ Here, $\delta_{x^*}$ is the Dirac measure at ${x^*}$ and the product measure $\delta_{x^*} \times \nu_{x^*}$ is defined by
$$\delta_{x^*} \times \nu_{x^*}(E)=\nu_{x^*}(E_{x^*})$$
for any Borel subset $E\subset \left(K_{X^*_0} \times \prod_{i=1}^N B_{X^*_i},\prod_{i=0}^N w^*\right)$ where 
$$E_{x^*}= \left\{ (y_i^*)_{i=1}^N \in \prod_{i=1}^N B_{X^*_i} \colon \left({x^*}, (y_i^*)_{i=1}^N\right) \in E\right\}.$$

Before we prove Claim 3, we first note that $\delta_{x^*} \times \nu_{x^*}$ is well defined in general. Indeed, for any Borel subset $E\subset \left(K_{X^*_0} \times \prod_{i=1}^N B_{X^*_i},\prod_{i=0}^N w^*\right)$, the intersection $E\cap \left(\{{x^*}\}\times \prod_{i=1}^N B_{X^*_i}\right)$ is again Borel. Since the map from $\prod_{i=1}^N B_{X^*_i}$ to the closed subspace $\{{x^*}\} \times \prod_{i=1}^N B_{X^*_i}$ given by 
$$(y_i^*)_{i=1}^N \mapsto \left({x^*}, (y_i^*)_{i=1}^N\right)$$
is a homeomorphism, it is clear that $E_{x^*}$ is Borel in $\left(\prod_{i=1}^N B_{X^*_i},\prod_{i=1}^N w^*\right)$.

Define the Radon measure $\nu_{x^*}$ on $\left(\prod_{i=1}^N B_{X^*_i},\prod_{i=1}^N w^*\right)$ by 
$$\nu_{x^*}(F)=\mu_{x^*}(\{{x^*}\}\times F).$$
for every Borel set $F\subset \left(\prod_{i=1}^N B_{X^*_i},\prod_{i=1}^N w^*\right)$. Then, we get that 
\begin{align*}
\delta_{x^*}\times \nu_{x^*}(E)
&=\nu_{x^*}(E_{x^*}) =\mu_{x^*}(\{{x^*}\}\times E_{x^*})\\
&=\mu_{x^*}\left(E\cap \left(\{{x^*}\}\times \prod_{i=1}^N B_{X^*_i}\right)\right)=\mu_{x^*}(E)
\end{align*}
 for any Borel subset $E\subset \left(K_{X^*_0} \times \prod_{i=1}^N B_{X^*_i},\prod_{i=0}^N w^*\right)$. This also clearly shows the equality 
$$\left|\nu_{x^*}\right|_\text{TV}=\left|\mu_{x^*}\right|_\text{TV}.$$\\

We now consider $Y:=\widehat\otimes_{\varepsilon,i=1}^N X_i$ as a subspace of $C\left(\prod_{i=1}^N B_{X^*_i},\prod_{i=1}^N w^*\right)$, and denote by $T_{x^*}$ the restriction $\nu_{x^*}\big|_Y$ of $\nu_{x^*}$ on $Y$. Note that $T_{x^*}\in {Y^*}\equiv\mathcal{I}\left(\prod_{i=1}^N {X_i}\right)\equiv\mathcal{N}\left(\prod_{i=1}^N {X_i}\right)$. By the assumption, for each ${x^*}\in\Lambda$, there exists $(\xi_{j,i,{x^*}}^*)_{(j,i)\in  \mathbb{N}\times\{1,\ldots, N\} }\subset X_i^*$ such that
$$T_{x^*}=\sum_{j}\otimes_{i=1}^N\xi_{j,i,{x^*}}^*\quad\text{and}\quad\|T_{x^*}\|_\mathcal{I}=\|T_{x^*}\|_\mathrm{nu}=\sum_{j}\prod_{i=1}^N\|\xi_{j,i,x^*}^*\|.$$\\

\emph{Claim 4}. For each ${x^*}\in \Lambda$, it holds that 
$$T_{\mu_{x^*}}=\sum_j{x^*}\otimes\left(\otimes_{i=1}^N \xi_{j,i,x^*}^*\right) \quad\text{and}\quad\left\|T_{\mu_{x^*}}\right\|_{\mathcal{I}}=\left\|T_{x^*}\right\|_\mathrm{nu}.$$\\

 For any finite sum $\sum_k \otimes_{i=0}^N x_{i,k}\in \widehat\otimes_{\varepsilon,i=0}^N X_i$, we have that 

\begin{align*}
T_{\mu_{x^*}}\left(\sum_k \otimes_{i=0}^N x_{i,k}\right)
&=\sum_kT_{\mu_{x^*}}\left(\otimes_{i=0}^N x_{i,k}\right)\\
&=\sum_k \int_{K_{X^*_0} \times \prod_{i=1}^N B_{X^*_i}}\prod_{i=0}^Nx_i^*(x_{i,k})\, d\mu_{x^*}\\
&=\sum_k\int_{K_{X^*_0} \times \prod_{i=1}^N B_{X^*_i}}\prod_{i=0}^N x_i^*(x_{i,k})\, d\delta_{x^*}\times\nu_{x^*}\\
&=\sum_k {x^*}(x_{0,k})\int_{\prod_{i=1}^N B_{X^*_i}}\prod_{i=1}^N x_i^*(x_{i,k})\, d\nu_{x^*}\\
&=\sum_k {x^*}(x_{0,k}) T_{x^*}\left(\otimes_{i=1}^N x_{i,k}\right)\\
&=\sum_k {x^*}(x_{0,k})\left(\sum_{j}\prod_{i=1}^N\xi_{j,i,x^*}^*(x_{i,k})\right) \\
&=\sum_j \left(\sum_{k}{x^*}(x_{0,k})\prod_{i=1}^N\xi_{j,i,x^*}^*(x_{i,k})\right)\\
&=\sum_j{x^*}\otimes\left(\otimes_{i=1}^N \xi_{j,i,x^*}^*\right)\left(\sum_k \otimes_{i=0}^N x_{i,k}\right).
\end{align*}
Hence, we deduce 
$$T_{\mu_{x^*}}=\sum_j{x^*}\otimes\left(\otimes_{i=1}^N \xi_{j,i,x^*}^*\right) \quad \text{and}\quad \left\|T_{\mu_{x^*}}\right\|_{\mathcal{I}}\leq \sum_{j}\prod_{i=1}^N\|\xi_{j,i,x^*}^*\|=\left\|T_{x^*}\right\|_\mathrm{nu}.$$
From Claim 2 and 3, we also get the reverse inequality
$$\left\|T_{\mu_{x^*}}\right\|_{\mathcal{I}}=\left|\mu_{x^*}\right|_\text{TV}=\left|\nu_{x^*}\right|_\text{TV}\geq\|T_{x^*}\|_\mathcal{I}=\left\|T_{x^*}\right\|_\mathrm{nu}$$
 which gives Claim 4.\\

Finally, we show $T\in \mathrm{NA}_\mathrm{nu}(\prod_{i=0}^N X_i)$. From the representation in Claim 2 and 4, we have that

$$T=\sum_{x^*\in\Lambda}\sum_j\left({x^*}\otimes\left(\otimes_{i=1}^N \xi_{j,i,x^*}^*\right)\right).$$

Moreover, we see that 

$$\|T\|_\mathrm{nu}\leq \sum_{x^*\in\Lambda}\sum_j\prod_{i=1}^N \|\xi_{j,i,x^*}^*\|=\sum_{x^*\in\Lambda}\left\|T_{x^*}\right\|_\mathrm{nu}=\sum_{x^*\in\Lambda}\left\|T_{\mu_{x^*}}\right\|_{\mathcal{I}}=\left\|T\right\|_{\mathcal{I}}\leq \|T\|_\mathrm{nu}$$ 
which finishes the proof.
\end{proof}

We are now ready to state and prove the main result of the section.

\begin{theorem}\label{theoremB}
Let $X$ and $Y$ be Banach spaces, and suppose $X$ is II-polyhedral. If one of the spaces $X^*$ or $Y^*$ has the AP, then $\mathrm{NA}\left(X^*\widehat\otimes_\pi Y^*\right)=X^*\widehat\otimes_\pi Y^*$.
\end{theorem}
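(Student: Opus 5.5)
The plan is to apply Theorem~\ref{main} with $N=1$, $X_0=X$ and $X_1=Y$, and then transfer the conclusion from nuclear operators to the projective tensor product through the AP hypothesis.

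First, I check the base hypothesis of Theorem~\ref{main} for the single space $X_1=Y$. Here $\mathcal{I}(Y)=(Y)^{*}$, since the one-fold injective tensor product is just $Y$, and $\mathcal{N}(Y)=Y^*$ with the nuclear norm equal to the dual norm. The bound $\|\sum_j \xi_j^*\|\leq\sum_j\|\xi_j^*\|$ is attained by the trivial representation $\xi^*=\xi^*$. Hence $\mathcal{N}(Y)\equiv\mathcal{I}(Y)$ isometrically and $\mathcal{N}(Y)=\mathrm{NA}_\mathrm{nu}(Y)$. Theorem~\ref{main} then gives
\[
\mathcal{N}(X\times Y)\equiv\mathcal{I}(X\times Y)\quad\text{and}\quad \mathcal{N}(X\times Y)=\mathrm{NA}_\mathrm{nu}(X\times Y).
\]
Identifying bilinear forms on $X\times Y$ with operators $X\to Y^*$, this says that every nuclear bilinear form attains its nuclear norm through a representation $\sum_j x_j^*\otimes y_j^*$ with $x_j^*\in X^*$ and $y_j^*\in Y^*$.

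Second, I use the AP. If $X^*$ or $Y^*$ has the AP, then by Grothendieck's result (the bilinear case of the statement in Subsection~\ref{Subsection 2}, equivalently $X^*\widehat\otimes_\pi Y^*\equiv\mathcal{N}(X,Y^*)$ as recalled before Corollary~\ref{Corollary:onedualAP-NAtensors-wstar-dense}), the canonical map $J\colon X^*\widehat\otimes_\pi Y^*\to\mathcal{N}(X\times Y)$ is an isometric isomorphism.

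Now take $z\in X^*\widehat\otimes_\pi Y^*$ and set $T=J(z)$. By the first step, there are sequences $(x_j^*)$ and $(y_j^*)$ with $T=\sum_j x_j^*\otimes y_j^*$ as bilinear forms and $\sum_j\|x_j^*\|\|y_j^*\|=\|T\|_\mathrm{nu}$. Since this sum is finite, the series $w=\sum_j x_j^*\otimes y_j^*$ converges absolutely in $X^*\widehat\otimes_\pi Y^*$ and satisfies $J(w)=T=J(z)$. Injectivity of $J$ gives $z=w$, and isometry gives
\[
\|z\|_\pi=\|T\|_\mathrm{nu}=\sum_j\|x_j^*\|\|y_j^*\|,
\]
so $z\in\mathrm{NA}(X^*\widehat\otimes_\pi Y^*)$.

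The only delicate point is bookkeeping rather than analysis. The representation produced in Theorem~\ref{main} has the form $x^*\otimes\xi^*$ with $x^*\in\mathrm{Ext}(B_{X^*})$, indexed by a countable set $\Lambda$ and then by $j$. I need to confirm that this double series, reindexed as a single sequence, is the kind of representation allowed in the definition of projective norm attainment. It is, because absolute summability makes the ordering irrelevant.
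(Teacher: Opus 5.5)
Your proposal is correct and follows the paper's own route: the paper's proof consists only of the remark that this is Theorem~\ref{main} with $N=1$. You have spelled out the two steps it leaves implicit. The first is the trivial base case $\mathcal{N}(Y)\equiv\mathcal{I}(Y)\equiv Y^*$, in which every functional attains its nuclear norm. The second is the transfer from nuclear norm-attainment to $\mathrm{NA}\left(X^*\widehat\otimes_\pi Y^*\right)$ via the isometric injectivity of $J$ under the AP.
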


\begin{proof}
This corresponds to the special case of Theorem \ref{main} for $N=1$.
\end{proof}

As an application to the classical theory of norm-attaining operators, we obtain Corollary \ref{cor:lineardense}  which is immediate using \cite[Corollary 3.11]{DJRR}. This provides new examples of pairs of Banach spaces satisfying the density of norm-attaining operators.

\begin{cor}\label{cor:lineardense}
Let $X$ and $Y$ be Banach spaces, and suppose $X$ is II-polyhedral. If one of the spaces $X^*$ or $Y^*$ has the AP,  then the sets of norm-attaining linear operators from $X^*$ to $Y^{**}$ and from $Y^*$ to $X^{**}$ are dense in the corresponding space of linear operators.
\end{cor}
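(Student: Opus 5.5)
The plan is to combine Theorem~\ref{theoremB} with \cite[Corollary 3.11]{DJRR}, which says that if every element of $Z\widehat\otimes_\pi W$ attains its projective norm, then $\mathrm{NA}(Z,W^*)$ is dense in $\mathcal{L}(Z,W^*)$.

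Assume $X$ is II-polyhedral and that $X^*$ or $Y^*$ has the AP. Theorem~\ref{theoremB} then gives
$$\mathrm{NA}\left(X^*\widehat\otimes_\pi Y^*\right)=X^*\widehat\otimes_\pi Y^*.$$
Apply \cite[Corollary 3.11]{DJRR} with $Z=X^*$ and $W=Y^*$. This yields density of $\mathrm{NA}(X^*,Y^{**})$ in $\mathcal{L}(X^*,Y^{**})$, which is the first assertion.

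For the second assertion, use the canonical isometry $X^*\widehat\otimes_\pi Y^*\equiv Y^*\widehat\otimes_\pi X^*$, $x^*\otimes y^*\mapsto y^*\otimes x^*$. It maps optimal representations to optimal representations, since the sums $\sum\|x_i^*\|\|y_i^*\|$ are symmetric in the two factors. Hence $\mathrm{NA}(Y^*\widehat\otimes_\pi X^*)=Y^*\widehat\otimes_\pi X^*$. Applying \cite[Corollary 3.11]{DJRR} with $Z=Y^*$ and $W=X^*$ gives density of $\mathrm{NA}(Y^*,X^{**})$ in $\mathcal{L}(Y^*,X^{**})$.

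The only point that needs checking is that \cite[Corollary 3.11]{DJRR} applies to arbitrary Banach spaces, with dual spaces in both slots. It does: it only uses the duality $(Z\widehat\otimes_\pi W)^*=\mathcal{L}(Z,W^*)$ together with a Bishop--Phelps--Bollob\'as type argument. So no further obstacle is expected.
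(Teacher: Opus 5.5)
Your proposal is correct and matches the paper's argument: Theorem~\ref{theoremB} gives $\mathrm{NA}(X^*\widehat\otimes_\pi Y^*)=X^*\widehat\otimes_\pi Y^*$, and applying \cite[Corollary 3.11]{DJRR} to this space and to its isometric flip $Y^*\widehat\otimes_\pi X^*$ yields both density statements. (One minor correction: the argument behind \cite[Corollary 3.11]{DJRR} is plain Bishop--Phelps in $(Z\widehat\otimes_\pi W)^*=\mathcal{L}(Z,W^*)$, together with the fact that a functional attaining its norm at a projective-norm-attaining tensor attains it at one of the elementary summands, rather than a Bishop--Phelps--Bollob\'as argument; this does not affect your proof.)
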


\section{Proximinality and norm-attaining tensors}\label{Section 5}
The results of the previous sections establish conditions under which every nuclear operator attains its nuclear norm.
In this section, we explore a structural consequence of this phenomenon: the proximinality of $\mathrm{ker}(J)$ in the projective tensor product.
Recall that for a Banach space $X$ and a subspace $Y\subset X$, we say that $Y$ is \emph{proximinal} in $X$ if, given any $x\in X$, it follows that
$$\mathrm{dist}(x,Y)=\min\{\|x-y\|\colon y\in Y\}.$$
This is equivalent to the fact that the quotient map $\pi\colon X\to X/Y$ satisfies the following:
given $z+Y\in X/Y$ there exists $x\in X$ such that $\pi(x)=z+Y$ and $\|x\|=\|z+Y\|$.
Let us comment that, to the best of the authors knowledge, the question whether $\ker(J)$ is proximinal has not been previously considered in the literature.

The desired connection between the phenomenon of nuclear norm-attainment and the proximinality of $\mathrm{ker}(J)$ is described in the following proposition.

\begin{prop}\label{proximinal}
    Let $X$ and $Y$ be two Banach spaces.
    If $\mathrm{NA}_\mathrm{nu}(X,Y)=\mathcal{N}(X,Y)$, then $\mathrm{ker}(J)$ is a proximinal subspace of $X^*\widehat\otimes_\pi Y$.
\end{prop}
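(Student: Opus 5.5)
We use the equivalent formulation of proximinality recalled just before the statement. Since $J\colon X^*\widehat\otimes_\pi Y\to\mathcal{N}(X,Y)$ is a norm-one quotient map inducing the isometric isomorphism $\mathcal{N}(X,Y)\equiv\left(X^*\widehat\otimes_\pi Y\right)/\ker(J)$, it suffices to show the following: for every $z\in X^*\widehat\otimes_\pi Y$ there is $w\in X^*\widehat\otimes_\pi Y$ with $J(w)=J(z)$ and $\|w\|_\pi=\|z+\ker(J)\|$. The quotient norm here equals $\|J(z)\|_\mathrm{nu}$.

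Fix $z$ and put $T\coloneqq J(z)\in\mathcal{N}(X,Y)$. By hypothesis $T\in\mathrm{NA}_\mathrm{nu}(X,Y)$. So there are sequences $(x_i^*)_i\subset X^*$ and $(y_i)_i\subset Y$ satisfying
$$T=\sum_{i=1}^\infty x_i^*\otimes y_i \quad\text{and}\quad \sum_{i=1}^\infty\|x_i^*\|\|y_i\|=\|T\|_\mathrm{nu}<\infty.$$
Since the series is absolutely convergent in the projective norm, we may define
$$w\coloneqq\sum_{i=1}^\infty x_i^*\otimes y_i\in X^*\widehat\otimes_\pi Y.$$

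We check two things about $w$. First, $J$ is continuous and linear, and its definition gives $J(w)=T=J(z)$. Hence $w-z\in\ker(J)$, so $w\in z+\ker(J)$. Second, we compare norms:
$$\|z+\ker(J)\|\leq\|w\|_\pi\leq\sum_{i=1}^\infty\|x_i^*\|\|y_i\|=\|T\|_\mathrm{nu}=\|z+\ker(J)\|.$$
The first inequality holds because $w$ lies in the coset, and the last equality is the quotient isometry. All the inequalities are therefore equalities. Setting $y\coloneqq z-w\in\ker(J)$ gives $\|z-y\|_\pi=\mathrm{dist}(z,\ker(J))$, so the distance is attained.

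There is no real obstacle. The only point needing care is that the nuclear norm of $T$ coincides with the quotient norm of $z+\ker(J)$, which is exactly the cited isometric identification $\mathcal{N}(X,Y)\equiv\left(X^*\widehat\otimes_\pi Y\right)/\ker(J)$. Once that is in place, a nuclear-norm-attaining representation of $J(z)$ directly produces a nearest point in $z+\ker(J)$.
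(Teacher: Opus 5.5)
Your proof is correct and takes essentially the same route as the paper. You lift a nuclear-norm-attaining representation of $J(z)$ to the tensor $w=\sum_i x_i^*\otimes y_i$, then squeeze $\|w\|_\pi$ between the quotient norm and $\sum_i\|x_i^*\|\|y_i\|$ using the isometry $\mathcal{N}(X,Y)\equiv\left(X^*\widehat\otimes_\pi Y\right)/\ker(J)$; you also cover the trivial case $z\in\ker(J)$, which the paper simply excludes.
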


\begin{proof}
Select any
    $$z+\mathrm{ker}(J)\in\Big(\left(X^*\widehat\otimes_\pi Y\right)/\mathrm{ker}(J)\Big)\setminus \{0+\mathrm{ker}(J)\}.$$  
    Observe that there exist sequences $(x_i^*)_i$ in $X^*$ and $(y_i)_i$ in $Y$ such that
    $$z+\mathrm{ker}(J)=\sum_{i=1}^\infty (x_i^*\otimes y_i+\mathrm{ker}(J))\quad\text{and}\quad \|z+\mathrm{ker}(J)\|=\sum_{i=1}^\infty\|x_i^*\|\|y_i\|.$$
    
    It is immediate from the isometric isomorphism between $\mathcal{N}(X,Y)$ and $\left(X^*\widehat\otimes_\pi Y\right)/\mathrm{ker}(J)$ and the assumption $\mathrm{NA}_\mathrm{nu}(X,Y)=\mathcal{N}(X,Y)$.

    For the quotient map $\pi\colon X^*\widehat\otimes_\pi Y\to\left(X^*\widehat\otimes_\pi Y\right)/\mathrm{ker}(J)$, it is clear that
    $$\pi(w)=z+\mathrm{ker}(J)$$ for $w\coloneqq\sum_{i=1}^\infty x_i^*\otimes y_i\in X^*\widehat\otimes_\pi Y$.
    
    Hence, the proof will be finished when we have proved that $\|z+\mathrm{ker}(J)\|=\|w\|$.
    On the one hand, since $\pi$ is a norm-one operator, the inequality $\|z+\mathrm{ker}(J)\|\leq\|w\|$ is immediate.
    On the other hand, the triangle inequality implies
    $$\|w\|\leq\sum_{i=1}^\infty\|x_i^*\|\|y_i\|=\|z+\mathrm{ker}(J)\|,$$
    as desired.
\end{proof}

Let us show that, in order for $\mathrm{ker}(J)$ to be proximinal, the assumption that every nuclear operator attains its norm is not necessary.

\begin{example}\slshape 
    In \cite{DGJR,GGR,Rue}, there are examples of Banach spaces $X$ and $Y$ with the AP (and, consequently, $\mathrm{ker}(J)=\{0\}$, which is trivially proximinal) and such that $\mathrm{NA}\left(X^*\widehat\otimes_\pi Y\right)\neq X^*\widehat\otimes_\pi Y\equiv\mathcal{N}(X,Y)$ (for instance this happens for $X=\ell_2$ and $Y=c_0$, and we may also consider the new Example~\ref{Example}).
 \end{example}

However, under the hypothesis of full-attainment of projective tensor products, Proposition~\ref{proximinal} is an equivalence.

\begin{rem}
{\slshape     Whenever $\mathrm{NA}\left(X^*\widehat\otimes_\pi Y\right)=X^*\widehat\otimes_\pi Y$, $\ker(J)$ is proximinal if and only if $\mathrm{NA}_\mathrm{nu}(X,Y)=\mathcal{N}(X,Y).$}
    Indeed, if $\ker(J)$ is proximinal, for any $T\in\mathcal{N}(X,Y)$ and any tensor $\tilde{T}\in X^*\widehat\otimes_\pi Y$ with $J(\tilde{T})=T$, we can find a best approximation $v\in\ker(J)$ such that $\|\tilde{T}-v\|_\pi=\mathrm{dist}(\tilde T,\ker(J))$. Letting $u=\tilde{T}-v$, we have $J(u)=T$ and $\|u\|_\pi=\|T\|_\mathrm{nu}$. By the hypothesis, $u$ has an optimal representation $u=\sum_{i=1}^\infty x^*_i\otimes y_i$. Therefore, 
    $$\|T\|_\mathrm{nu}=\|J(u)\|_\mathrm{nu}\leq\sum_{i=1}^\infty\|x^*_i\|\|y_i\|=\|u\|_\pi=\|T\|_\mathrm{nu}.$$    
However, we do not know whether $\mathrm{NA}_\mathrm{nu}(X,Y)=\mathcal{N}(X,Y)$ whenever $\mathrm{NA}\left(X^*\widehat\otimes_\pi Y\right)=X^*\widehat\otimes_\pi Y$. 
\end{rem}  

Let us prove that $\ker(J)$ is proximinal in $X^*\pten Y^*$ when $X^*$ and $Y^*$ are separable.

\begin{theorem}\label{theo:mainproxi}
Let $X$ and $Y$ be Banach spaces. If $X^*$ and $Y^*$ are separable, then $\ker(J)$ is a proximinal subspace of $X^*\widehat\otimes_\pi  Y^*$.
\end{theorem}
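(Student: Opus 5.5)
The plan is to realize the distance directly by a Bochner integral of elementary tensors against an optimal representing measure. Separability of $X^*$ and $Y^*$ is used to make that integral meaningful in the norm of $X^*\widehat\otimes_\pi Y^*$.

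\emph{Reduction.} Fix $z\in X^*\widehat\otimes_\pi Y^*$ and set $T\coloneqq J(z)\in\mathcal{N}(X,Y^*)$. Since $J$ induces the isometry $\mathcal{N}(X,Y^*)\equiv (X^*\widehat\otimes_\pi Y^*)/\ker(J)$, we have $\mathrm{dist}(z,\ker(J))=\|T\|_\mathrm{nu}$. It therefore suffices to find $u\in X^*\widehat\otimes_\pi Y^*$ with $J(u)=T$ and $\|u\|_\pi\leq\|T\|_\mathrm{nu}$. Then $v\coloneqq z-u\in\ker(J)$ and $\|z-v\|_\pi=\|u\|_\pi\leq \mathrm{dist}(z,\ker(J))$, so $v$ is a best approximation.

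\emph{Construction of $u$.} Regard $T$ as an element of $\mathcal{I}(X,Y^*)\equiv(X\widehat\otimes_\varepsilon Y)^*$. By the Hahn--Banach theorem, pick a Radon measure $\mu$ on $K\coloneqq(B_{X^*}\times B_{Y^*},w^*\times w^*)$ representing $T$ with $|\mu|_\mathrm{TV}=\|T\|_\mathcal{I}\leq\|T\|_\mathrm{nu}$. Consider the map
$$\Psi\colon K\to X^*\widehat\otimes_\pi Y^*,\qquad \Psi(x^*,y^*)=x^*\otimes y^*.$$
It is bounded by $1$ and norm-to-norm continuous. We then define $u\coloneqq\int_K \Psi\,d\mu$ as a Bochner integral with respect to $\mu$ (via its Radon--Nikod\'ym density with respect to $|\mu|$). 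This gives $\|u\|_\pi\leq\int\|\Psi\|\,d|\mu|\leq|\mu|_\mathrm{TV}\leq \|T\|_\mathrm{nu}$. To check $J(u)=T$, evaluate at $x\otimes y$ and pass the bounded functional inside the Bochner integral:
$$\langle J(u)x,y\rangle=\int_K x^*(x)y^*(y)\,d\mu=\langle Tx,y\rangle.$$

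\emph{Main obstacle: measurability of $\Psi$.} The measure $\mu$ lives on the $w^*$-Borel $\sigma$-algebra, while Bochner integrability needs $\Psi$ to be strongly measurable. Here separability enters. Since $X^*$ is norm-separable, $B_{X^*}$ is a Polish space in the norm. The identity from $(B_{X^*},\|\cdot\|)$ to $(B_{X^*},w^*)$ is a continuous bijection onto a Hausdorff space. By the Lusin--Souslin theorem, norm-Borel sets are therefore $w^*$-Borel, so the two Borel $\sigma$-algebras coincide; the same holds for $B_{Y^*}$.

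Some care is needed with the product. The $w^*$-topology on $B_{X^*}$ is generally not metrizable here (it is metrizable only when $X$ is separable, which is not assumed), so the identification of the product Borel $\sigma$-algebra is not automatic. I would handle it in one of two ways. One option is to apply Lusin--Souslin once more to the identity from the Polish space $(B_{X^*}\times B_{Y^*},\|\cdot\|\times\|\cdot\|)$ onto $K$. The other is to use the Radon property, approximating $|\mu|$ from inside by $w^*$-compacta, on which Borel sets behave well. Either way $\Psi$ becomes $w^*$-Borel measurable. It takes values in the separable space $X^*\widehat\otimes_\pi Y^*$ (separable because $X^*$ and $Y^*$ are), so Pettis' theorem gives strong $|\mu|$-measurability.

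Once measurability is settled, the remaining steps are the routine estimates above.
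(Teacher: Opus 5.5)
Your proof is correct and follows essentially the same route as the paper: a norm-preserving representing measure $\mu$ from Hahn--Banach, the Bochner integral $u=\int x^*\otimes y^*\,d\mu$ justified by the coincidence of norm- and $w^*$-Borel sets on the dual balls, and the estimate $\|u\|_\pi\leq|\mu|_\mathrm{TV}=\|T\|_\mathcal{I}\leq\|T\|_\mathrm{nu}$ together with $J(u)=T$. One minor correction: separability of $X^*$ already forces $X$ to be separable, so $(B_{X^*},w^*)$ and $(B_{Y^*},w^*)$ are compact metrizable; the product $\sigma$-algebra issue you flag is therefore routine (both factors are second countable) and does not need a second appeal to Lusin--Souslin.
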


We need some definitions and preliminary results. For Banach spaces $X$ and $Y$, from now on, we write $\Omega=(B_{X^*}\times B_{Y^*},w^*\times w^*)$ and consider the canonical map $\varphi\colon\Omega\to X^*\widehat{\otimes}_\pi Y^*$ defined by 
\begin{equation}\label{eq:varphi}\varphi(x^*,y^*)\coloneqq x^*\otimes y^*.\end{equation}

\begin{prop}\label{main-sec4}
Let $X$, $Y$ be Banach spaces and $\mu$ be a Radon measure which represents a nuclear operator $T\in\mathcal{N}(X,Y^*)$ (as an integral operator). If $\varphi$ is $\mu$-Bochner integrable, then the element $\displaystyle u=\int_{\Omega}\varphi\,d\mu\in X^*\widehat{\otimes}_\pi Y^*$ satisfies $Ju=T$.
    If, in addition, $|\mu|_\mathrm{TV}=\|T\|_\mathcal{I}$, then $\|u\|_\pi=|\mu|_\mathrm{TV}=\|T\|_\mathrm{nu}$.
\end{prop}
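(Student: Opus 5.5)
The plan is to test $Ju$ against elementary tensors and then compare norms using the contractivity of $J$ together with the inclusion $\mathcal{N}\subset\mathcal{I}$.

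First, $J\colon X^*\widehat\otimes_\pi Y^*\to\mathcal{N}(X,Y^*)$ is a bounded linear operator. Bochner integrals commute with bounded linear operators, so
$$Ju=\int_\Omega J\circ\varphi\,d\mu$$
as a Bochner integral in $\mathcal{N}(X,Y^*)$. For fixed $(x,y)\in X\times Y$, the map $S\mapsto\langle Sx,y\rangle$ is a bounded functional on $\mathcal{N}(X,Y^*)$, with norm at most $\|x\|\|y\|$, since $\|S\|\leq\|S\|_\mathrm{nu}$. Pushing this functional through the integral gives
$$\langle (Ju)x,y\rangle=\int_\Omega x^*(x)y^*(y)\,d\mu(x^*,y^*)=\langle Tx,y\rangle.$$
Hence $Ju=T$. (One could equivalently pair $u$ with $x\otimes y$ viewed as a functional on $X^*\widehat\otimes_\pi Y^*$, via $\langle x^*\otimes y^*, x\otimes y\rangle = x^*(x)y^*(y)$, and then use $\langle (Ju)x,y\rangle=\langle u,x\otimes y\rangle$, which holds on elementary tensors and extends by continuity.) A small point to note is that Bochner integrability with respect to a signed or complex measure is understood through $|\mu|$ and the Radon–Nikodym density $d\mu=h\,d|\mu|$ with $|h|=1$. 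This causes no problem.

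For the norm statement, the Bochner integral estimate gives
$$\|u\|_\pi\leq\int_\Omega\|\varphi\|\,d|\mu|\leq|\mu|_\mathrm{TV},$$
because $\|x^*\otimes y^*\|_\pi=\|x^*\|\|y^*\|\leq1$ on $\Omega$. On the other side, $J$ is a norm-one quotient map, so $\|T\|_\mathrm{nu}=\|Ju\|_\mathrm{nu}\leq\|u\|_\pi$. Finally, every nuclear operator is integral with $\|T\|_\mathcal{I}\leq\|T\|_\mathrm{nu}$. Chaining these,
$$|\mu|_\mathrm{TV}=\|T\|_\mathcal{I}\leq\|T\|_\mathrm{nu}\leq\|u\|_\pi\leq|\mu|_\mathrm{TV},$$
so all the quantities are equal.

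The main obstacle is making the commutation of $J$ (or of the evaluation functional) with the Bochner integral rigorous. This is the standard fact that bounded operators commute with Bochner integrals (Hille's theorem, in the easy bounded case), applied with respect to $|\mu|$ after absorbing the density $h$ into the integrand. Everything else is a chain of inequalities that uses only facts already recalled in the introduction.
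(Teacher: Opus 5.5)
Your proposal is correct and follows essentially the same argument as the paper. The paper tests $Ju$ directly against the functional $e_{x,y}(v)=\langle Jv(x),y\rangle$ on $X^*\widehat\otimes_\pi Y^*$, which is exactly your composition of $J$ with the evaluation functional. It then uses the same chain $\|T\|_\mathrm{nu}\leq\|u\|_\pi\leq|\mu|_\mathrm{TV}=\|T\|_\mathcal{I}\leq\|T\|_\mathrm{nu}$.
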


\begin{proof}
For $x\in X$ and $y\in Y$, let $e_{x,y}$ be the evaluation map on $X^*\widehat\otimes_\pi Y^*$ defined by
$$e_{x,y}(v)=\langle Jv(x),y\rangle$$
for all $v\in X^*\widehat\otimes_\pi Y^*$. Then, we have
    \begin{align*}
        \langle Ju(x),y\rangle=e_{x,y}(u)&=\int_\Omega e_{x,y}(\varphi(x^*,y^*))\,d\mu\\
        &=\int_\Omega e_{x,y}(x^*\otimes y^*)\,d\mu=\int_\Omega x^*(x)y^*(y)\,d\mu=\langle T(x),y\rangle.
    \end{align*}
    This shows that $Ju=T$.

    Moreover, thanks to the inequality $\|\varphi(x^*,y^*)\|_\pi\leq\|x^*\|\|y^*\|$, we have
    $$\|u\|_\pi=\left\|\int_\Omega \varphi\,d\mu\right\|_\pi\leq\int_\Omega\|x^*\otimes y^*\|_\pi\,d|\mu| \leq\int_\Omega1\,d|\mu|\leq|\mu|_\mathrm{TV}.$$
    Hence, under the additional assumption that $|\mu|_\mathrm{TV}=\|T\|_\mathcal{I}$, the chain of inequalities 
    $$\|T\|_\mathrm{nu}\leq\|u\|_\pi\leq|\mu|_\mathrm{TV}=\|T\|_\mathcal{I}\leq\|T\|_\mathrm{nu}$$
    produces the equality $\|u\|_\pi=|\mu|_\mathrm{TV}=\|T\|_\mathrm{nu}$.
\end{proof}

The next lemma guarantees that $\varphi$ is always $\mu$-Bochner integrable in our context of working with separable dual spaces.

\begin{lemma}\label{lemma-sec4-integrable}
    Let $X$ and $Y$ be Banach spaces. If $X^*$ and $Y^*$ are separable, then the map $\varphi$ is $\mu$-Bochner integrable for any finite Radon measure $\mu$ on $\Omega$.
\end{lemma}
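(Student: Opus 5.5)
To prove that $\varphi$ is $\mu$-Bochner integrable, we verify the two conditions of the Bochner integrability criterion. The first is that $\varphi$ is $\mu$-strongly measurable, i.e.\ a $|\mu|$-a.e.\ limit of simple functions. The second is that $\int_\Omega\|\varphi\|_\pi\,d|\mu|<\infty$. The second condition is immediate: $\|\varphi(x^*,y^*)\|_\pi=\|x^*\|\|y^*\|\leq 1$ on $\Omega$ and $|\mu|$ is finite, so the whole difficulty lies in strong measurability. By the Pettis measurability theorem it suffices to show that $\varphi$ is essentially separably valued and weakly $|\mu|$-measurable.

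Separable valuedness is where the hypothesis enters. Since $X^*$ and $Y^*$ are separable, $X^*\widehat\otimes_\pi Y^*$ is separable: finite sums of $x_n^*\otimes y_m^*$, with $(x_n^*)$ and $(y_m^*)$ dense sequences, are dense. Hence $\varphi$ takes values in a separable space, with no exceptional null set needed.

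For weak measurability, we first show that $\varphi$ is Borel measurable from $\Omega$ (with the $w^*\times w^*$ topology) into $X^*\widehat\otimes_\pi Y^*$ with its norm topology. It is enough to check that $(x^*,y^*)\mapsto\|x^*\otimes y^*-v\|_\pi$ is Borel for each $v$ in a countable dense set: in a separable metric space the open balls centered at a countable dense set with rational radii generate the Borel $\sigma$-algebra, so preimages of such balls being Borel gives Borel measurability. To obtain the Borel property, we use that the norm on a dual space is $w^*$-lower semicontinuous. Thus $x^*\mapsto\|x^*-a^*\|$ is $w^*$-lsc, hence Borel, on $B_{X^*}$ for each fixed $a^*$. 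Separability of $X^*$ also gives that the norm Borel $\sigma$-algebra of $B_{X^*}$ is contained in the $w^*$-Borel $\sigma$-algebra, since norm-open balls are countable unions of norm-closed balls and norm-closed balls are $w^*$-closed. The same holds for $B_{Y^*}$.

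Now $\varphi$ is norm-continuous as a map $B_{X^*}\times B_{Y^*}\to X^*\widehat\otimes_\pi Y^*$ with the norm product topology, because $\|x^*\otimes y^*-a^*\otimes b^*\|_\pi\le\|x^*-a^*\|+\|y^*-b^*\|$. Hence $\varphi$ is Borel for the product of the norm topologies. Since $B_{X^*}$ and $B_{Y^*}$ are separable metric spaces in norm, the Borel $\sigma$-algebra of their product is the product $\sigma$-algebra of the norm Borel $\sigma$-algebras. By the previous paragraph this is contained in the product of the $w^*$-Borel $\sigma$-algebras, which in turn is contained in the Borel $\sigma$-algebra of $\Omega$. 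Consequently $\varphi$ is Borel measurable on $\Omega$, and in particular $|\mu|$-measurable.

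This step, comparing the norm and $w^*$ Borel structures, is the main obstacle. It relies on separability twice: once to write norm-open balls as countable unions of $w^*$-closed balls, and once to identify the product Borel $\sigma$-algebra with the product of Borel $\sigma$-algebras. With strong measurability and the integrable bound in hand, Bochner's theorem yields the conclusion.
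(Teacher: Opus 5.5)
Your proof is correct and follows the same route as the paper. Both arguments use separability to see that norm-Borel subsets of $B_{X^*}$ and $B_{Y^*}$ are $w^*$-Borel, pass to the product via the identification of the Borel $\sigma$-algebra of a product of separable metric spaces, and conclude from the norm-continuity of $\varphi$ and the separability of $X^*\widehat\otimes_\pi Y^*$. The paper cites the coincidence of the norm and $w^*$ Borel $\sigma$-algebras on the dual balls, whereas you prove the needed inclusion by hand (norm-closed balls are $w^*$-closed). One small misattribution: separability is not needed to write an open ball as a countable union of closed balls. It is needed to write an arbitrary norm-open set as a countable union of balls.
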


\begin{proof}
By \cite[Proposition 6.5]{BCJ}, the Borel $\sigma$-algebra on $(B_{X^*},\|\cdot\|)$ and the one on $(B_{X^*},w^*)$ coincide (and, thus, we have the same for $B_{Y^*}$). Hence, by \cite[Proposition 1.2]{Fol}, the Borel $\sigma$-algebra on $\Omega$ is actually the same as the one on $(B_{X^*}\times B_{Y^*},\|\cdot\|_{X^*}\times\|\cdot\|_{Y^*})$.
Since $\varphi$ is continuous with respect to the norm-topology and $X^*\widehat{\otimes}_\pi Y^*$ is separable, it is $\mu$-measurable and, consequently, it is $\mu$-Bochner integrable for any finite Radon measure $\mu$.
\end{proof}

Now, we are ready to provide the pending proof.

\begin{proof}[Proof of Theorem~\ref{theo:mainproxi}]
Let $\Phi\colon X\widehat\otimes_\varepsilon Y\hookrightarrow C(\Omega)$ be the natural isometric inclusion map defined in the introduction. Note that $\Phi^*\colon \mathcal{M}(\Omega)\to\mathcal{I}(X,Y^*)$ is a quotient map and $\mathrm{ker}(\Phi^*)$ is proximinal.

For a nuclear operator $T\in \mathcal{N}(X,Y^*)\equiv\mathcal{I}(X,Y^*)$ (the later isometric equivalence holds because $Y^*$ has the RNP \cite[p.~524, Corollary 1]{DeFlo}), there is a Radon measure $\mu$ representing $T$ such that $|\mu|_\mathrm{TV}=\|T\|_\mathcal{I}$ by the Hahn-Banach theorem. By  Proposition \ref{main-sec4} and Lemma \ref{lemma-sec4-integrable}, an element $u\in X^*\widehat{\otimes}_\pi Y^*$ defined by
$$u=\int_\Omega\varphi\,d\mu,$$
satisfies
\[Ju=T \quad \text{and}\quad \|u\|_\pi=|\mu|_\mathrm{TV}=\|T\|_\mathrm{nu}.\qedhere\]
\end{proof}

According to \cite[Definition 2.1]{ADGJR}, given two Banach spaces $X$ and $Y$, it is said that an element $u\in X\widehat\otimes_\pi Y$ is an \emph{integral projective norm-attaining tensor} if there exists a finite positive Borel measure $\mu$ on $B_X\times B_Y$ with $|\mu|_\mathrm{TV}=\|u\|_\pi$ such that the map $\psi\colon B_X\times B_Y \to X\widehat{\otimes}_\pi Y$ defined by 
$$\psi(x,y)\coloneqq x\otimes y$$
is $\mu$-Bochner integrable and $u$ can be represented by the Bochner integral
$$u=\int_{B_X\times B_Y} \psi\,d\mu.$$ 

In the case that $X$ and $Y$ are dual spaces, observe that $\psi$ is exactly the same as the map $\varphi$ given in \eqref{eq:varphi} for dual spaces.
We write $\mathrm{INA}\left(X\widehat\otimes_\pi Y\right)$ to denote the subset of $X\widehat\otimes_\pi Y$ of all integral projective norm-attaining tensors.
It is known (and clear) that the inclusion $\mathrm{NA}\left(X\widehat\otimes_\pi Y\right)\subseteq \mathrm{INA}\left(X\widehat\otimes_\pi Y\right)$ always holds, and whether it is an equality was posed as an open problem in \cite[Question~2.2]{ADGJR}. (In Corollary~\ref{corollary:NA-neq-INA}, we will provide a negative answer to this question.)

More in general, the authors considered in \cite[Section 3]{ADGJR} the following notion: given a topology $\tau$ on $B_X\times B_Y$, it is said that $u$ is \emph{$\tau$ integral projective norm-attaining} if the measure $\mu$ above is $\tau$-Borel, and denoted by  $\mathrm{INA}_\tau\left(X\widehat\otimes_\pi Y\right)$ the set of all $\tau$ integral projective norm-attaining tensors.

From the proofs of Theorem~\ref{theo:mainproxi} and Lemma~\ref{lemma-sec4-integrable} the following result is immediate. 

\begin{cor}\label{cor:INA}
Let $X$ and $Y$ be Banach space. If $X^*$ and $Y^*$ are separable and one of the spaces $X^*$ or $Y^*$ has the AP, then $X^*\widehat\otimes_\pi Y^*=\mathrm{INA}\left(X^*\widehat{\otimes}_\pi Y^*\right)$.
\end{cor}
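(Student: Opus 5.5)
Given $u\in X^*\widehat\otimes_\pi Y^*$, the plan is to show $u\in\mathrm{INA}\left(X^*\widehat\otimes_\pi Y^*\right)$ by reproducing, for $T\coloneqq Ju$, the construction in the proof of Theorem~\ref{theo:mainproxi}, and then using the AP to identify the resulting tensor with $u$.

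\textbf{Setup.} Since one of $X^*$ or $Y^*$ has the AP, Grothendieck's result (\cite[Proposition 4.6]{Rya}) tells us that $J\colon X^*\widehat\otimes_\pi Y^*\to\mathcal{N}(X,Y^*)$ is an isometric isomorphism. Put $T\coloneqq Ju$; then $\|T\|_\mathrm{nu}=\|u\|_\pi$. Since $X^*$ is separable, $Y^*$ has the RNP, so $\mathcal{N}(X,Y^*)\equiv\mathcal{I}(X,Y^*)$ isometrically (\cite[p.~524, Corollary 1]{DeFlo}, as in the proof of Theorem~\ref{theo:mainproxi}). By Hahn--Banach applied through $\Phi\colon X\widehat\otimes_\varepsilon Y\hookrightarrow C(\Omega)$, there is a Radon measure $\mu$ on $\Omega$ representing $T$ with $|\mu|_\mathrm{TV}=\|T\|_\mathcal{I}=\|T\|_\mathrm{nu}$.

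\textbf{Building the integral.} By Lemma~\ref{lemma-sec4-integrable}, $\varphi$ is $\mu$-Bochner integrable, so $v\coloneqq\int_\Omega\varphi\,d\mu$ is defined. Proposition~\ref{main-sec4} then gives $Jv=T=Ju$ and $\|v\|_\pi=|\mu|_\mathrm{TV}$. Injectivity of $J$ yields $v=u$, so $u=\int_\Omega\varphi\,d\mu$ with $|\mu|_\mathrm{TV}=\|u\|_\pi$.

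\textbf{Main obstacle: positivity of the measure.} The definition of $\mathrm{INA}$ asks for a \emph{positive} measure, while $\mu$ is a signed (or complex) measure. To fix this, I will write $d\mu=h\,d|\mu|$ with $|h|=1$ $|\mu|$-a.e., taking $h$ Borel measurable by the Radon--Nikod\'ym theorem. Define $\Theta\colon\Omega\to\Omega$ by $\Theta(x^*,y^*)=(h(x^*,y^*)x^*,y^*)$; this is Borel measurable because the norm and $w^*$ Borel $\sigma$-algebras coincide here (\cite[Proposition 6.5]{BCJ}). Let $\nu\coloneqq\Theta_\#|\mu|$, a positive Borel measure with total mass $|\mu|_\mathrm{TV}=\|u\|_\pi$. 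By the change of variables formula for Bochner integrals,
$$\int_\Omega\varphi\,d\nu=\int_\Omega h\,x^*\otimes y^*\,d|\mu|=\int_\Omega\varphi\,d\mu=u.$$
Bochner integrability of $\varphi$ with respect to $\nu$ follows from boundedness and measurability of $\varphi$, exactly as in Lemma~\ref{lemma-sec4-integrable}.

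Finally, $\nu$ is a finite Borel measure on the $w^*$ topology, and since $\Omega$ is compact metrizable it is automatically Radon. Moreover, its Borel sets coincide with the norm Borel sets of $B_{X^*}\times B_{Y^*}$, so $\nu$ qualifies as the measure required in \cite[Definition 2.1]{ADGJR}. This shows $u\in\mathrm{INA}\left(X^*\widehat\otimes_\pi Y^*\right)$.
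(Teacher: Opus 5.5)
Your proof is correct and is essentially the paper's argument: the paper calls the corollary immediate from the proofs of Theorem~\ref{theo:mainproxi} and Lemma~\ref{lemma-sec4-integrable} (take a representing measure with $|\mu|_\mathrm{TV}=\|T\|_\mathcal{I}$, Bochner-integrate $\varphi$, then use injectivity of $J$ under the AP to identify the integral with $u$), and your pushforward $\nu=\Theta_\#|\mu|$ just makes explicit the passage to a positive measure, which the paper leaves tacit. One small slip: the RNP of $Y^*$, needed for $\mathcal{N}(X,Y^*)\equiv\mathcal{I}(X,Y^*)$, comes from $Y^*$ being a separable dual, not from the separability of $X^*$.
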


\begin{rem}~
\begin{itemize}
\item[(1)]
{\slshape The condition in Corollary~\ref{cor:INA} that both dual spaces $X^*$ and $Y^*$ are separable cannot be weakened to the separability of just one of the duals.}\ Indeed, \cite[Example~4.1]{ADGJR} showed that $\mathrm{INA}\left(C(\mathbb{T})^*\widehat\otimes_\pi\ell_2^2\right)\neq C(\mathbb{T})^*\widehat\otimes_\pi\ell_2^2$.  
\item[(2)] Analogously, {\slshape the condition in Corollary~\ref{cor:INA} that both spaces are dual spaces cannot be weakened to the fact that just one of them is a dual space.}\ Indeed, \cite[Example~4.1]{ADGJR} showed that $\mathrm{INA}\left(L_1(\mathbb{T}\right) \widehat\otimes_\pi\ell_2^2)\neq L_1(\mathbb{T}) \widehat\otimes_\pi\ell_2^2$.  
\end{itemize}
\end{rem}

Corollary~\ref{cor:INA} improves \cite[Proposition~3.2]{ADGJR} in which the reflexivity of one of the spaces $X$ and $Y$ is required and that the thesis of such proposition is that $X^*\pten Y^*=\mathrm{INA}_{w^*}\left(X^*\widehat{\otimes}_\pi Y^*\right)$.

To finish the paper, we show that Theorem~\ref{theo:mainproxi} and Example~\ref{Example} give a negative solution to \cite[Question~2.2]{ADGJR} of whether the sets $\NA\left(X\widehat\otimes_\pi Y\right)$ and $\mathrm{INA}\left(X\widehat\otimes_\pi Y\right)$ always coincide.

\begin{cor}\label{corollary:NA-neq-INA}
For $1< p,q<\infty$ with $\frac{1}{p}+\frac{1}{q}<1$, it holds that
$$
\NA\left(\ell_p \pten \ell_q\right)\neq \ell_p \widehat\otimes_\pi \ell_q \qquad \text{while} \qquad \mathrm{INA}\left(\ell_p\widehat\otimes_\pi\ell_q\right)= \ell_p\widehat\otimes_\pi\ell_q.
$$
\end{cor}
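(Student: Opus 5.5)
The first assertion, $\NA\left(\ell_p \pten \ell_q\right)\neq \ell_p \widehat\otimes_\pi \ell_q$, is exactly Example~\ref{Example}, so nothing new is needed there. The work is in the second assertion, which I would obtain from Corollary~\ref{cor:INA} by realising $\ell_p$ and $\ell_q$ as duals of Banach spaces whose duals are separable and have the AP.

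Put $X\coloneqq \ell_{p'}$ and $Y\coloneqq \ell_{q'}$, where $p'$ and $q'$ are the conjugate exponents of $p$ and $q$. Since $1<p,q<\infty$, we have $X^*\equiv\ell_p$ and $Y^*\equiv\ell_q$ isometrically. Both duals are separable, and both have the AP because they have a Schauder basis (in fact a monotone one). Thus every hypothesis of Corollary~\ref{cor:INA} holds, and it gives
$$\ell_p\widehat\otimes_\pi\ell_q = X^*\widehat\otimes_\pi Y^*=\mathrm{INA}\left(X^*\widehat\otimes_\pi Y^*\right)=\mathrm{INA}\left(\ell_p\widehat\otimes_\pi\ell_q\right).$$

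The only step needing care is that $\mathrm{INA}$ is defined intrinsically. By \cite[Definition 2.1]{ADGJR}, it uses finite positive Borel measures on $B_{\ell_p}\times B_{\ell_q}$ together with the map $\psi(x,y)=x\otimes y$. This matches what Theorem~\ref{theo:mainproxi} and Lemma~\ref{lemma-sec4-integrable} produce, as the paper notes when it identifies $\psi$ with $\varphi$ for dual spaces. Because $\ell_{p}$ and $\ell_q$ are separable duals, the $w^*$-Borel and norm-Borel $\sigma$-algebras on the balls coincide, so it does not matter which topology the Borel structure refers to.

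One further point should be checked in the proof of Corollary~\ref{cor:INA}: the representing measure $\mu$ with $|\mu|_{\mathrm{TV}}=\|T\|_\mathcal{I}$ may be signed, whereas the definition asks for a positive measure. I would handle this by polar decomposition, $d\mu=h\,d|\mu|$ with $|h|=1$. One then replaces $|\mu|$ by its pushforward under the map $(x^*,y^*)\mapsto(h(x^*,y^*)x^*,y^*)$, which stays inside $B_{X^*}\times B_{Y^*}$ and is Borel. This yields a positive measure with the same total variation whose Bochner integral of $\psi$ is still $u$. With that settled, both assertions hold, and \cite[Question~2.2]{ADGJR} is answered in the negative.
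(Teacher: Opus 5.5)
Your proposal is correct and follows the paper's route: the first assertion is Example~\ref{Example}, and the second is Corollary~\ref{cor:INA} (that is, Theorem~\ref{theo:mainproxi}) applied with $X=\ell_{p'}$ and $Y=\ell_{q'}$. Your polar-decomposition and pushforward step is valid and makes the representing measure positive, as the definition of $\mathrm{INA}$ requires; the paper leaves this detail implicit when it calls Corollary~\ref{cor:INA} immediate.
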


\begin{rem}\label{rem:mentionjung2}
The authors were recently aware of the recent preprint \cite{jung26}, where M. Jung solved independently the above mentioned \cite[Question~2.2]{ADGJR}. Indeed, he proved that every infinite dimensional Banach space $X$ admits an equivalent renorming for which $\mathrm{INA}(X\pten Y)\setminus \NA(X\pten X)$ is non-empty.
\end{rem}

\subsection*{Acknowledgements}
The authors are deeply grateful to Sheldon Dantas for pointing out a mistake in a previous version of the paper. They also thank him for fruitful comments.
Manwook Han thanks the University of Granada for its hospitality during his visit in November-December 2025, and the ``Maria de Maeztu'' Excellence Unit IMAG for its partial support, which covered his local expenses during this stay.

\subsection*{Funding}
Han and Kim were supported by the National Research Foundation of Korea(NRF) grant funded by the Korea government(MSIT) [NRF-2020R1C1C1A01012267]. The research of Martín and Rueda-Zoca has been supported by MICIU/AEI/10.13039/501100011033 and ERDF/EU through the grants PID2021-122126NB-C31, by ``Maria de Maeztu'' Excellence Unit IMAG, funded by MICIU/AEI/10.13039/501100011033 with reference CEX2020-001105-M, and Junta de Andaluc\'ia, grant FQM-0185. Part of this work was done while Manwook Han visited the University of Granada in November-December 2025, partially supported by the ``Maria de Maeztu'' Excellence Unit IMAG. 

\subsection*{Competing Interests} The authors have no relevant financial or non-financial interests to disclose.

\subsection*{Author Contributions} All authors contributed equally to the whole part of works together such as the study conception, design, and writing. All authors read and approved the final manuscript.


\end{document}